\documentclass[11pt]{article}
\usepackage{amsthm,amssymb,amsmath,graphicx,cite}

\newcommand{\ip}[2]{\left\langle #1 , #2 \right\rangle}    
\newcommand{\R}{{\mathbb R}}
\renewcommand{\S}{{\mathbb S}}

\newcommand{\PI}{\text{\sf PI}}
\newcommand{\Gr}{\text{\sf Gr}}

\newcommand{\sym}{{\mathrm{Sym}}}

\newcommand{\vertiii}[1]{{\vert\kern-0.25ex\vert\kern-0.25ex\vert #1 
    \vert\kern-0.25ex\vert\kern-0.25ex\vert}}

\DeclareMathOperator{\dist}{dist_G}
\DeclareMathOperator{\odist}{\overline{dist_G}}
\DeclareMathOperator{\Rdist}{dist_R}

\DeclareMathOperator{\lspan}{span}
\DeclareMathOperator{\interior}{int}
\DeclareMathOperator{\Image}{Im}

\newtheorem{lemma}{Lemma}
\newtheorem{theorem}{Theorem}
\newtheorem{proposition}{Proposition}

\newtheorem{corollary}{Corollary}

\newtheorem{example}{Example}
\def\transp{^{\text{\sf T}}}
\newcommand{\matr}[1]{\begin{bmatrix} #1 \end{bmatrix}}    

\newcommand{\tr}{\mbox{\rm trace}}

\title{On the Grassmann condition number}
\author{Javier Pe\~na\thanks{Tepper School of Business,
Carnegie Mellon University, USA, {\tt jfp@andrew.cmu.edu}} \and Vera Roshchina\thanks{School of Science,
RMIT University, AUSTRALIA, {\tt vera.roshchina@rmit.edu.au}}}

\begin{document}

\maketitle

\begin{abstract}
We give new insight into the {\em Grassmann condition} of the conic  feasibility problem
\begin{equation}\label{eq.the.system}
\text{ find }  x \in L \cap K \setminus\{0\}.
\end{equation}
Here $K\subseteq V$ is a regular convex cone and $L\subseteq V$ is a linear subspace of the finite dimensional Euclidean vector space $V$.  The Grassmann condition of \eqref{eq.the.system} is the reciprocal of the {\em distance} from $L$ to the set of {\em ill-posed} instances in the Grassmann manifold where $L$ lives.

We consider  a very general distance in the Grassmann manifold defined by two possibly different norms in $V$.  We establish the equivalence between the Grassmann distance to ill-posedness of \eqref{eq.the.system} and a natural measure of the {\em least violated} trial solution to the alternative to \eqref{eq.the.system}. We also show a tight relationship between the Grassmann  and Renegar's condition measures, and between the Grassmann measure and a {\em symmetry} measure of  \eqref{eq.the.system}.

Our approach can be readily specialized to a canonical norm in $V$ induced by $K$, a prime example being the one-norm for the non-negative orthant.  For this special case we show that the Grassmann distance ill-posedness of \eqref{eq.the.system} is equivalent to a measure of the {\em most interior} solution to \eqref{eq.the.system}.

\end{abstract}

\newpage

\section{Introduction}
Assume $V$ is a finite dimensional Euclidean vector space and $K\subseteq V$ is a closed convex cone.  Given a linear subspace $L\subseteq V$, consider the feasibility problem
\begin{equation}\label{primal}
\text{ find} \; x\in L\cap K \setminus\{0\}.
\end{equation}
Feasibility problems of this form are pervasive in optimization.  The  constraints of linear, semidefinite, and more general conic programming are written in the form~\eqref{primal}.  The fundamental signal recovery property in compressive sensing can be stated precisely as the infeasibility of \eqref{primal} for suitable $K$ and $L$ as it is nicely explained in~\cite{AmelLMT14}.

We develop novel insights into  the {\em Grassmann condition} of  \eqref{primal}, which is defined as follows.  Let $\Gr(V,m)$ denote the Grassmann manifold of  linear subspaces $L\subseteq V$ of dimension $m<\dim(V)$.  Given $L\in \Gr(V,m)$ consider the following measure of well-posedness of the problem \eqref{primal}: How far, in some suitable distance in $\Gr(V,m)$, can $L$ be perturbed without changing the feasibility status of \eqref{primal}.    The size of the largest such perturbation of $L$ is the {\em Grassmann distance to ill-posedness}.
The reciprocal of this distance to ill-posedness is the  {\em Grassmann condition} of $L$.

The Grassmann condition is a relatively recent development in the literature on conditioning for optimization initiated by Renegar~\cite{Rene95a,Rene95} and further studied by a number of authors~\cite{BellF09a,CheuC01,CheuCP03,DontLR03,EpelF00,EpelF02,Freu04,FreuV99a,FreuV99b,Lewi99,Pena00}.   In contrast to Renegar's condition, which inherently depends on the {\em data} of some linear operator defining $L$, the Grassmann condition is data independent as it is entirely constructed in terms of objects in the Grassmann manifold.  The initial developments on the Grassmann condition can be traced to the articles~\cite{AmelB12,BellF09a,CheuCP10}.  This Grassmann condition  machinery has in turn been instrumental in further developments concerning the average behavior of condition numbers~\cite{AmelB15}.    Our work is inspired by the main ideas in~\cite{AmelB12} and~\cite{CheuCP10}.  In spite of their conceptual overlap, the two articles~\cite{AmelB12,CheuCP10} were completed independently of each other.  On the one hand, Amelunxen and Burgisser~\cite{AmelB12}, extending ideas from Belloni and Freund~\cite{BellF09a}, propose a Grassmann condition by working with a canonical projection-based distance in $\Gr(V,m)$.  They establish an interesting connection between the Grassmann and Renegar's condition. They also give some characterizations of the Grassmann condition in terms of angles.
 On the other hand, Cheung, Cucker and Pe\~na~\cite{CheuCP10} introduce essentially the same concept of Grassmann condition albeit in a different format.  They use a more general type of distance in the Grassmann manifold and a more nuanced type of ill-posedness but their development is restricted to the special case  $V=\R^n,\; K = \R^n_+$.  Cheung et al.~\cite{CheuCP10} establish several equivalences and bounds between the Grassmann condition and a variety of other condition measures for polyhedral feasibility problems.

This article combines and extends the main ideas and results from both \cite{AmelB12} and \cite{CheuCP10}.  Our approach applies to general regular cones as in~\cite{AmelB12} but we use a very general distance function in $\Gr(V,m)$ in the same spirit of~\cite{CheuCP10}.  Section~\ref{sec.Grassmann} below details the construction of the relevant distance function.  Our first main result (Theorem~\ref{thm:Grassmann.nu}) shows that when \eqref{primal} is feasible, its Grassmann distance to ill-posedness matches a  natural measure of the {\em least violated trial solution} to the alternative system to~\eqref{primal} namely
\begin{equation}\label{dual}
u\in L^\perp\cap K^*  \setminus\{0\}.
\end{equation}
Here $L^\perp$ and $K^*$ are  the orthogonal complement of $L$ and the dual cone of $K$ respectively. We also give a counterpart of this result for the case when \eqref{primal} is infeasible, that is, when \eqref{dual} is feasible.  This first result extends \cite[Theorem 1]{CheuCP10} as well as \cite[Proposition 1.6]{AmelB12}.  

Our second main result (Theorem~\ref{thm.renegar}) establishes a tight connection between the Grassmann condition and Renegar's condition.  In the particular case when the space $L$ is defined via a linear isometry, Theorem~\ref{thm.renegar} shows that the Grassmann and Renegar's condition measures are exactly the same.  Otherwise Renegar's condition is sandwiched between the Grassmann condition and the Grassmann condition times the condition number of the linear operator defining $L$.  The statement of Theorem~\ref{thm.renegar} is nearly identical to \cite[Theorem 1.4]{AmelB12}.  Our approach reveals  that the relationship between Grassmann and Renegar's condition numbers indeed holds in much wider generality.

Our general approach can be seamlessly specialized to a canonical norm {\em induced} by $K$.  Prime examples of this induced norm are the one-norm for the non-negative orthant and the nuclear norm for the cone of positive semidefinite matrices.   
When one of the norms in $V$ is the norm induced by  $K$, the Grassmann distance to ill-posedness of \eqref{primal} can be characterized in terms of the {\em most interior} solution to~\eqref{primal}.  We also introduce a related family of {\em eigenvalue mappings} induced by $K$ that allows us to relate the Grassmann condition with another popular condition measure, namely the {\em symmetry measure} which has been used by several authors~\cite{BellF08,BellF09b,BertGS11,EpelF02}.  The exact connection between the symmetry measure and other condition measures has only been explored in the special  case $V=\R^n, \; K=\R^n_+$ by Epelman and Freund~\cite{EpelF02}.   We show that some of the results concerning the symmetry measure established in~\cite{EpelF02} hold in more generality and are nicely related to the Grassmann condition measure (Theorem~\ref{thm.symmetry}).

It should be noted that we pay a small price for the of generality of our approach.  Our formal results are stated in terms of two related but different distances in the Grassmann manifold, as we detail in Section~\ref{sec.Grassmann} below.  This is necessary due to a certain asymmetry in these distance functions.  The statements in Theorem~\ref{thm:Grassmann.nu} and Theorem~\ref{thm.renegar} reflect this asymmetry: the claims concerning the distance to ill-posedness of \eqref{primal} are  slightly different in the feasible and infeasible cases. In the special case of Euclidean norms, this kind of asymmetry disappears and the relevant quantities from  Theorem~\ref{thm:Grassmann.nu} can be equivalently phrased in terms of principal angles (Proposition~\ref{prop.euclidean}).   The particular Euclidean norm case also subsumes some of the main results from \cite{AmelB12} within the context of our work.

We mention in passing the work by Lewis~\cite{Lewi99} and Lewis and co-authors~\cite{BorwL06,DontLR03} that was also inspired by Renegar's seminal work on condition numbers~\cite{Rene95a,Rene95}.  The article~\cite{Lewi99} shows that Renegar's distance to ill-posedness can be seen as a special case of the {\em distance to non-surjectivity} of sublinear set-valued mappings.  The textbook~\cite{BorwL06} also includes a related discussion~\cite[Section 5.4]{BorwL06}.  The  article~\cite{Pena04} subsequently showed that the generalization in \cite{Lewi99} is a special case of a generic and natural correspondence between conic systems and sublinear set-valued mappings.  The article~\cite{DontLR03} extends the ideas in~\cite{Lewi99} to the more general concept of {\em radius of metric regularity} of set-valued mappings.  Like Renegar's approach, the underlying setting in~\cite{BorwL06,DontLR03,Lewi99} is inherently data-dependent in contrast to the data-independent approach of this article.  Although a connection between our work and~\cite{BorwL06,DontLR03,Lewi99} might exist, the sharp difference in data dependence between the two approaches makes that potential connection far from evident.

The main sections of the paper are organized as follows.  Section~\ref{sec.Grassmann} presents our main technical developments.  We construct two distances $\dist$ and $\odist$ in $\Gr(V,m)$ determined by two arbitrary norms in $V$.  We show (Theorem~\ref{thm:Grassmann.nu}) the equivalence between the distance to ill-posedness of \eqref{primal} and a measure of the trial solution that least violates its alternative~\eqref{dual}.  We also establish a tight relationship between the Grassmann condition and Renegar's condition (Theorem~\ref{thm.renegar}).  Section~\ref{sec.norms} specializes our development to some particular choices of norms in $V$, namely the Euclidean norm and a norm induced by $K$.  For Euclidean norms, we show that our main construction and results can be stated in terms of  {\em principal angles.}  For the induced norm we show that the distance to ill-posedness coincides with a natural measure of the {\em most centered} solution to \eqref{primal}.  Section~\ref{sec.symmetry.meas} shows an additional connection between the Grassmann distance and the {\em symmetry} condition studied by Belloni, Freund, and others~\cite{BellF08,BellF09b,BertGS11,EpelF02}.  Finally Section~\ref{sec.symm.poly} specializes our developments to two particularly important cases.  The first one is the case when $V$ is an Euclidean Jordan algebra and $K$ is the cone of squares in $V$.  The rich algebraic structure of this case in turn yields an interesting structure on the quantities characterizing the Grassmann condition and the symmetry condition.  The second special case is $V=\R^n, K=\R^n_+$.  The peculiar structure of this case enables us to refine our main developments by relying on the classical Goldman-Tucker partition theorem.

\section{The Grassmann condition measure}
\label{sec.Grassmann}

Throughout the entire paper we assume $V$ is a finite dimensional Euclidean vector space with inner product $\ip{\cdot}{\cdot}$ and $K\subseteq V$ is a regular closed convex cone.  We also assume that $\|\cdot\|, \; \vertiii{\cdot}$ are norms in $V$, not necessarily the same, and neither of them necessarily Euclidean.  We let $\|\cdot\|^*, \vertiii{\cdot}^*$ denote the dual norms of $\|\cdot\|$ and $\vertiii{\cdot}$ respectively.  That is, for $u \in V$
$$\|u\|^* := \max_{\|x\|=1}\ip{u}{x}, \; \text{ and } \;
\vertiii{u}^* := \max_{\vertiii{x}=1}\ip{u}{x}.$$
Notice that by construction, the following  {\em H\"older's inequality} holds for all $u,x\in V$
\begin{equation}\label{eq.holder}
|\ip{u}{x}| \le \|u\|^* \cdot \|x\|, \quad |\ip{u}{x}| \le \vertiii{u}^* \cdot \vertiii{x}.
\end{equation}
We let $K^*$ denote the {\em dual cone} of $K$, that is,
\[
K^*:=\{u\in V: \ip{u}{x} \ge 0  \; \forall x\in K\}.
\]
Given a linear subspace $L \subseteq V$, let $L^\perp$ denote the {\em orthogonal complement} of $L$ in $V$, that is,
\[
L^\perp:= \{u\in V: \ip{u}{x} = 0  \; \forall x\in L\}.
\]
Our central objects of attention are the two feasibility problems \eqref{primal} and \eqref{dual} defined by a linear subspace $L\subseteq V$.   Observe that \eqref{primal} and \eqref{dual} are {\em alternative systems:} one of them has a strictly feasible solution if and only if the other one is infeasible.

For ease of notation, we will use the following convention throughout the paper.  The symbols  $v,x,z$ will denote ``primal vectors'', that is, elements of $L$ and $K$.  On the other hand, the symbols  $u,w,y$ will denote ``dual vectors'', that is, elements of $L^\perp$ and $K^*$.

Define the distance functions $\dist,\odist: \Gr(V,m) \times \Gr(V,m)\rightarrow \R$ as follows
\[
\dist(L_1,L_2):= \max_{x\in L_1\atop x\ne 0} \min_{v\in L_2} \frac{\vertiii{x-v}}{\|x\|}, \; \odist(L_1,L_2):= \max_{x\in L_1\atop x\ne 0} \inf_{v\in L_2\atop v\ne 0} \frac{\vertiii{x-v}}{\|v\|}.
\]
The infimum in the definition of $\odist(\cdot,\cdot)$ is essential.  For instance, if $\|\cdot\|=\vertiii{\cdot}=\|\cdot\|_2$ and $L_1\subseteq L_2^\perp$ then 
$\displaystyle\inf_{v\in L_2,v\ne 0} \frac{\|x-v\|_2}{\|v\|_2}=1$ for all $x\in L_1,\, x\ne 0$ but this infimum is not attained. 

The  distances $\dist(\cdot,\cdot)$ and $\odist(\cdot,\cdot)$ are not necessarily symmetric: In general, it may be the case that $\dist(L_1,L_2)\neq \dist(L_2,L_1)$ and $\odist(L_1,L_2)\neq \odist(L_2,L_1)$.  For example, let $L_1,L_2\in\Gr(\R^2,1)$ be 
\[
L_1 = \lspan\left\{\matr{1\\0}\right\}, \; L_2 = \lspan\left\{\matr{1\\1}\right\}.
\]
For these subspaces and for $\|\cdot\|=\vertiii{\cdot}=\|\cdot\|_1$ it is easy to see that  
\[
\dist(L_1,L_2)=1\ne \frac{1}{2} =\dist(L_2,L_1)\]
and \[ \odist(L_1,L_2)=\frac{1}{2} \ne 1 = \odist(L_2,L_1).\]
The distances  $\dist(\cdot,\cdot)$ and $\odist(\cdot,\cdot)$ are bounded above as follows
\begin{equation}\label{eq.upperbounds.dist}
\dist(L_1,L_2) \le \max_{x\in L_1\atop x\ne 0}\frac{\vertiii{x}}{\|x\|}, \; \odist(L_1,L_2) \le \min_{v\in L_2\atop v\ne 0}\frac{\vertiii{v}}{\|v\|}.
\end{equation}
In the special case when $\|\cdot \| = \vertiii{\cdot} = \|\cdot\|_2$ the distances $\dist(\cdot,\cdot)$ and $\odist(\cdot,\cdot)$ coincide and are both symmetric (see Proposition~\ref{prop.euclidean} below).

Define the set $\Sigma_m \subseteq \Gr(V,m)$ of $m$-dimensional {\em ill-posed subspaces} with respect to the cone $K$ as follows
$$
\Sigma_m := \{L\in \Gr(V,m)\,|\, L\cap K \neq \{0\} \text{ and } L^\perp\cap K^*  \ne \{0\}\}. 
$$
Observe that $L\in \Gr(V,m) \setminus\Sigma_m$ if and only if either $L\cap \interior(K) \ne \emptyset$ or $L^\perp\cap \interior(K^*) \ne \emptyset.$ 
Define the {\em Grassmann distances to ill-posedness} of $L\in \Gr(V,m)$ as follows
\[
\dist(L,\Sigma_m) := \min_{\tilde L\in \Sigma_m} \dist(L,\tilde L), \; \odist(\Sigma_m,L) := \min_{\tilde L\in \Sigma_m} \odist(\tilde L,L).
\]
Observe that if $L\cap \interior(K) \ne \emptyset$ then  
\[
\dist(L,\Sigma_m) = \min\{\dist(L,\tilde L): \tilde L^\perp \cap K^* \ne \{0\}\}.
\]
Similarly, if $L^\perp\cap \interior(K^*) \ne \emptyset$  then  
\[
\odist(\Sigma_m,L) = \min\{\odist(\tilde L,L): \tilde L \cap K \ne \{0\}\}.
\]
The above distances to ill-posedness can alternatively be seen as {\em radii of well-posedness:}  If $L \in \Gr(V,m)\setminus \Sigma_m$ then for all $\tilde L \in \Gr(V,m)$
\[
\dist(L,\tilde L)<\dist(L,\Sigma_m) \Rightarrow \tilde L \not \in \Sigma_m
\]
and 
\[
\odist(\tilde L,L)<\odist(\Sigma_m,L) \Rightarrow \tilde L \not \in \Sigma_m.
\]
\subsection{Least violated trial solutions}
Define $\nu,\bar \nu:\Gr(V,m)\rightarrow \R$ as follows
\[
\nu(L):=
\min_{u\in K^*, y\in L^\perp \atop \vertiii{u}^*=1} \|y-u\|^*, \; \; 
\bar \nu(L):=\min_{v\in K, x\in L \atop \|x\|=1} \vertiii{v-x}.
\]
Observe that $L\cap \interior(K) \ne \emptyset$ if and only if  $\nu(L) >0.$ 
In this case $\nu(L)$ can be seen as a measure of a {\em least violated} trial point for the alternative problem $u \in L^\perp \cap K^*\setminus\{0\}.$  Similarly, $L^\perp\cap \interior(K^*) \ne \emptyset$ if and only if  $\bar\nu(L) >0.$ 
In this case $\bar\nu(L)$ can be seen as a measure of a {\em least violated} trial point for the alternative problem $x \in L \cap K\setminus\{0\}.$ 
In Section~\ref{sec.induced.norm} below we show that for a suitable norm $\vertiii{\cdot}$ in $V$, $\nu(L)$ can also be seen as a measure of the {\em most interior} normalized solution to the feasibility problem $x \in L\cap \interior(K)$ when this problem is feasible.

The next result generalizes~\cite[Theorem 1]{CheuCP10} as well as \cite[Proposition 1.6]{AmelB12}:

\begin{theorem}\label{thm:Grassmann.nu} 
Assume $L\in \Gr(V,m)$.
\begin{description} 
\item[(a)]
If $L \cap \interior(K) \neq \emptyset$ then
$\dist(L,\Sigma_m) = \nu(L).$
\item[(b)] If $L^\perp \cap \interior(K^*) \neq \emptyset$ then $\odist(\Sigma_m,L) = \bar\nu(L).$
\end{description}
\end{theorem}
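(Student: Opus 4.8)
The plan is to prove each of (a) and (b) by a pair of matching inequalities: a \emph{lower bound} coming from a ``perturbations preserve well-posedness'' argument, and an \emph{upper bound} coming from an explicit nearby subspace sitting on the boundary of the well-posed region. Two standard convex-duality facts will be used repeatedly: for a linear subspace $S\subseteq V$ and any $x\in V$,
\[
\min_{v\in S}\vertiii{x-v}=\max\{\ip{w}{x}:\ w\in S^\perp,\ \vertiii{w}^*\le 1\},
\]
and for any $u\in V$, $\max\{\ip{u}{z}:\ z\in L,\ \|z\|\le 1\}=\min_{y\in L^\perp}\|u-y\|^*$ (Lagrangian duality using the affine description of $L$). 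I would also note that the hypothesis of (a) forces $L^\perp\cap K^*=\{0\}$ and that of (b) forces $L\cap K=\{0\}$, both via the elementary fact that $\ip{u}{x}>0$ whenever $u\in K^*\setminus\{0\}$ and $x\in\interior(K)$.

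\emph{Lower bound in (a).} Since $L\cap\interior(K)\ne\emptyset$ gives $\dist(L,\Sigma_m)=\min\{\dist(L,\tilde L):\tilde L^\perp\cap K^*\ne\{0\}\}$, it suffices to prove $\dist(L,\tilde L)\ge\nu(L)$ whenever $\tilde L^\perp\cap K^*\ne\{0\}$. Fix $\tilde u\in\tilde L^\perp\cap K^*$ with $\vertiii{\tilde u}^*=1$. By the second duality fact there is $z\in L\setminus\{0\}$ with $\ip{\tilde u}{z}/\|z\|=\min_{y\in L^\perp}\|\tilde u-y\|^*\ge\nu(L)$, the inequality because $\tilde u$ is feasible for the program defining $\nu(L)$; for this $z$ the first duality fact gives $\min_{v\in\tilde L}\vertiii{z-v}=\max\{\ip{w}{z}:w\in\tilde L^\perp,\vertiii{w}^*\le 1\}\ge\ip{\tilde u}{z}$, whence $\dist(L,\tilde L)\ge\min_{v\in\tilde L}\vertiii{z-v}/\|z\|\ge\ip{\tilde u}{z}/\|z\|\ge\nu(L)$.

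\emph{Upper bound in (a).} Let $(u,y)$ attain $\nu(L)$, so $u\in K^*\setminus\{0\}$, $\vertiii{u}^*=1$, and (by optimality of $y$ for this $u$ and the second duality fact) $\nu(L)=\min_{y'\in L^\perp}\|u-y'\|^*=\max_{z\in L\setminus\{0\}}|\ip{u}{z}|/\|z\|>0$. Pick $d\in V$ with $\vertiii{d}=1$ and $\ip{u}{d}=\vertiii{u}^*=1$ (equality in H\"older), and let $\tilde L$ be the image of $L$ under the rank-one map $M:z\mapsto z-\ip{u}{z}\,d$. Then $\ip{u}{Mz}=\ip{u}{z}-\ip{u}{z}\ip{u}{d}=0$ for every $z$, so $u\in\tilde L^\perp\cap K^*\setminus\{0\}$; and for $z\in L\setminus\{0\}$ we get $\min_{v\in\tilde L}\vertiii{z-v}\le\vertiii{z-Mz}=|\ip{u}{z}|\,\vertiii{d}=|\ip{u}{z}|\le\nu(L)\|z\|$, so $\dist(L,\tilde L)\le\nu(L)$ and hence $\dist(L,\Sigma_m)\le\nu(L)$; with the lower bound this proves (a). The only point needing care is that $\tilde L$ must lie in $\Gr(V,m)$, i.e.\ $M$ must be injective on $L$: here $\Ker M=\R d$, so this holds provided $d\notin L$, and in the degenerate case $d\in L$ one perturbs $d$ slightly off $L$ (possible since $L$ is a proper subspace), obtaining $\tilde L_k$ with $\tilde L_k^\perp\cap K^*\ne\{0\}$ and $\dist(L,\tilde L_k)\to\nu(L)$, so $\dist(L,\Sigma_m)\le\inf_k\dist(L,\tilde L_k)\le\nu(L)$.

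\emph{Part (b)} follows the same scheme but must be argued separately, since $\dist$ and $\odist$ are not dual to one another. For the lower bound, using $\odist(\Sigma_m,L)=\min\{\odist(\tilde L,L):\tilde L\cap K\ne\{0\}\}$, fix $\tilde x\in(\tilde L\cap K)\setminus\{0\}$; writing $v=\lambda v'$ with $v'\in L$, $\|v'\|=1$, $\lambda>0$, a short computation gives $\inf_{v\in L\setminus\{0\}}\vertiii{\tilde x-v}/\|v\|=\inf_{v'\in L,\,\|v'\|=1}\min_{\mu\ge 0}\vertiii{\mu\tilde x-v'}$, which is $\ge\inf_{v'\in L,\,\|v'\|=1}\min_{w\in K}\vertiii{w-v'}=\bar\nu(L)$ because $\R_+\tilde x\subseteq K$; maximizing over $\tilde x$ and minimizing over $\tilde L$ yields $\odist(\Sigma_m,L)\ge\bar\nu(L)$. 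For the upper bound, let $(v^*,x^*)$ attain $\bar\nu(L)$, pick $\theta$ with $\|\theta\|^*=1$ and $\ip{\theta}{x^*}=\|x^*\|=1$, and for $\bar w\in K\setminus\{0\}$ let $\tilde L$ be the image of $L$ under $M:z\mapsto z+\ip{\theta}{z}(\bar w-x^*)$, which sends $x^*$ to $\bar w$, so $\tilde L\cap K\ne\{0\}$; H\"older gives $\odist(\tilde L,L)\le\vertiii{\bar w-x^*}$. Taking $\bar w=v^*$ when $v^*\ne 0$, and $\bar w\to 0$ otherwise, yields $\odist(\tilde L,L)\le\vertiii{v^*-x^*}=\bar\nu(L)$, hence $\odist(\Sigma_m,L)\le\bar\nu(L)$; here $\tilde L\in\Gr(V,m)$ is automatic since $\Ker M\subseteq\R(\bar w-x^*)$ and $\bar w\notin L$ (because $L\cap K=\{0\}$). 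The main obstacle throughout is the upper-bound construction: pinning down the correct rank-one perturbation, checking that it does not collapse the dimension of $L$, and handling the genuine asymmetry between (a) and (b).
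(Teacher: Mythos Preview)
Your proof is correct and follows essentially the same strategy as the paper's: the lower bound comes from feeding a point of $\tilde L^\perp\cap K^*$ (respectively $\tilde L\cap K$) into the minimization defining $\nu$ (respectively $\bar\nu$), and the upper bound comes from an explicit nearby subspace. Your rank-one map $M:z\mapsto z-\ip{u}{z}d$ in (a) is in fact the same construction the paper uses, just packaged differently: $M$ is the identity on $\{z\in L:\ip{u}{z}=0\}$, which equals the paper's $L\cap H$ (since $\bar y\in L^\perp$ makes $\ip{\bar u-\bar y}{z}=\ip{\bar u}{z}$ for $z\in L$), and $M$ sends the optimizer $\bar x$ to $\bar x-\nu d$, which is the paper's $\bar x-\nu\bar v$. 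The same goes for (b). One minor difference is that you treat the corner cases (the dimension drop when $d\in L$; the case $v^*=0$) by limiting arguments, whereas the paper handles $\bar v=0$ in (b) directly via the a priori bound $\odist(\tilde L,L)\le\min_{x\in L,\|x\|=1}\vertiii{x}$; both are fine, and you are slightly more explicit than the paper about the dimension issue.
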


The proof of Theorem~\ref{thm:Grassmann.nu} relies on the properties of $\dist(\cdot,\cdot)$ and $\nu(\cdot)$ stated in Proposition~\ref{prop.dist} and Proposition~\ref{prop.nu} below.  These propositions in turn are consequences of the following technical lemma.

\begin{lemma}\label{lemma.min.norm} Assume $L\in \Gr(V,m)$ and $p\in V$. Then
\begin{equation}\label{eq.min.norm} 
\min_{z\in L} \|p-z\| = \max_{q\in L^\perp\atop \|q\|^* = 1}\ip{q}{p}.
\end{equation}
Furthermore, $\bar z \in L$ and $\bar q \in L^\perp$ with $\|\bar q\|^*=1$ attain the  optimal values in \eqref{eq.min.norm} if and only if $\|p - \bar z\| = \ip{\bar q}{ p - \bar z} =  \ip{\bar q}{p}.$
\end{lemma}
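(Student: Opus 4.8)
The plan is to recognize \eqref{eq.min.norm} as an instance of convex duality. The left-hand side is the distance from $p$ to the subspace $L$ in the norm $\|\cdot\|$, which we can write as $\min_{z\in L}\|p-z\| = \min_{r\in p+L}\|r\|$, where $p+L$ is an affine subspace. The natural route is to use the sublinearity of $\|\cdot\|$ together with the support function representation $\|r\| = \max_{\|q\|^*\le 1}\ip{q}{r}$, and then swap the min over $z\in L$ with the max over the dual ball. Concretely, I would first write
\[
\min_{z\in L}\|p-z\| = \min_{z\in L}\ \max_{\|q\|^*\le 1}\ip{q}{p-z},
\]
and then justify exchanging $\min$ and $\max$: the dual ball $\{q:\|q\|^*\le 1\}$ is compact and convex, $L$ is convex (though unbounded), and the function $(z,q)\mapsto\ip{q}{p-z}$ is bilinear, hence convex-concave; Sion's minimax theorem (or a direct Hahn--Banach separation argument) gives equality. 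After the swap we get $\max_{\|q\|^*\le 1}\min_{z\in L}\ip{q}{p-z}$. The inner minimum over the subspace $L$ is $-\infty$ unless $q\perp L$, i.e.\ $q\in L^\perp$, in which case it equals $\ip{q}{p}$. So the outer maximum is effectively taken over $q\in L^\perp$ with $\|q\|^*\le 1$, and since the objective $\ip{q}{p}$ is linear and homogeneous we may replace $\|q\|^*\le 1$ by $\|q\|^*=1$ (assuming $p\notin L$; the case $p\in L$ makes both sides zero and is handled separately, with $\bar q$ any unit vector in $L^\perp$ orthogonal to $p$). This establishes \eqref{eq.min.norm}.

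For the characterization of optimal solutions, I would argue as follows. Suppose $\bar z\in L$ and $\bar q\in L^\perp$ with $\|\bar q\|^*=1$ attain the two optimal values; by \eqref{eq.min.norm} these values are equal, call the common value $d$. Since $\bar q\in L^\perp$ we have $\ip{\bar q}{p}=\ip{\bar q}{p-\bar z}$, and by H\"older's inequality \eqref{eq.holder}, $\ip{\bar q}{p-\bar z}\le\|\bar q\|^*\cdot\|p-\bar z\| = \|p-\bar z\| = d = \ip{\bar q}{p}$, forcing equality throughout: $\|p-\bar z\| = \ip{\bar q}{p-\bar z} = \ip{\bar q}{p}$. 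Conversely, if $\bar z\in L$, $\bar q\in L^\perp$, $\|\bar q\|^*=1$ satisfy $\|p-\bar z\| = \ip{\bar q}{p-\bar z} = \ip{\bar q}{p}$, then $\ip{\bar q}{p}=\|p-\bar z\|\ge\min_{z\in L}\|p-z\|$, so $\bar q$ is dual-optimal (it cannot overshoot the max by \eqref{eq.min.norm}), and hence $\|p-\bar z\| = \ip{\bar q}{p} = \min_{z\in L}\|p-z\|$ shows $\bar z$ is primal-optimal.

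The main obstacle is the rigorous justification of the minimax exchange, since $L$ is unbounded, so the textbook form of von Neumann's minimax theorem (both sets compact) does not apply verbatim. Sion's theorem covers this (one set compact suffices, with appropriate semicontinuity), but if one prefers a self-contained argument, the cleanest alternative is to invoke finite-dimensional Fenchel--Rockafellar duality directly: the function $z\mapsto\|p-z\|$ restricted to $L$ has conjugate-based dual exactly the right-hand side, and strong duality holds because the primal value is finite and the relevant relative-interior qualification is automatic in finite dimensions for a full-domain norm. Everything else — the collapse of the inner minimum to the condition $q\in L^\perp$, the homogeneity argument to normalize $\|q\|^*=1$, and the equality-in-H\"older bookkeeping for the attainment statement — is routine.
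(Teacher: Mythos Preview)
Your proposal is correct and follows essentially the same route as the paper: write $\|p-z\|=\max_{\|q\|^*\le 1}\ip{q}{p-z}$, swap the min and max by convex duality, observe that the inner minimum forces $q\in L^\perp$, and then use H\"older's inequality for the attainment characterization. The paper simply invokes ``properties of norms and convex duality'' for the swap, whereas you spell out the justification via Sion/Fenchel--Rockafellar; but the argument is the same.
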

\begin{proof} By properties of norms and convex duality we have
\[
\min_{z\in L} \|p-z\| = \min_{z\in L}\max_{q\in V\atop \|q\|^* \le 1}\ip{q}{p-z}
=
\max_{q\in V\atop \|q\|^* \le 1} \min_{z\in L}\ip{q}{p-z} = 
\max_{q\in L^\perp\atop \|q\|^* = 1}\ip{q}{p}.
\]
This shows~\eqref{eq.min.norm}. 
The second statement follows from \eqref{eq.min.norm} and the the fact that  
\[
\|p - \bar z\| \le \ip{\bar q}{ p - \bar z} =  \ip{\bar q}{p}
\]
for $\bar z \in L$ and $\bar q \in L^\perp$ with $\|\bar q\|^*=1$.
\end{proof}

\begin{proposition}\label{prop.dist} For $L_1,L_2\in \Gr(V,m)$
\[
\dist(L_1,L_2) = \max_{x\in L_1\atop \|x\|=1} \min_{v\in L_2} \vertiii{x-v} = 
\max_{x\in L_1,u\in L_2^\perp \atop \|x\| = 1, \vertiii{u}^*=1} \ip{u}{x} 
= \max_{u\in L_2^\perp\atop \vertiii{u}^*=1} \min_{y\in L_1^\perp} \|u-y\|^*.
\]
Furthermore, $\bar x\in L_1, \bar v\in L_2, \bar u\in L_2^\perp, \bar y\in L_1^\perp$ attain the above optimal values if and only if $\|\bar x\|=1, \vertiii{\bar u}^* = 1, 
\ip{\bar u}{\bar x-\bar v} = \ip{\bar u}{\bar x} = \vertiii{\bar x-\bar v},$ and 
$\ip{\bar x}{\bar u-\bar y} = \ip{\bar u}{\bar x} = \|\bar u -\bar y\|^*.$
\end{proposition}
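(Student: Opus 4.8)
The plan is to establish the three displayed expressions for $\dist(L_1,L_2)$ in sequence and then read off the optimality characterization from the equality cases collected along the way.

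First I would dispose of the first equality by homogeneity. For $x\in L_1\setminus\{0\}$ set $\hat x=x/\|x\|$; since $L_2$ is a subspace, $v\mapsto v/\|x\|$ is a bijection of $L_2$, so $\min_{v\in L_2}\frac{\vertiii{x-v}}{\|x\|}=\min_{v\in L_2}\vertiii{\hat x-v}$, which depends on $x$ only through $\hat x$. Hence the outer maximum in the definition of $\dist(L_1,L_2)$ may be restricted to the compact set $\{x\in L_1:\|x\|=1\}$, on which the continuous map $x\mapsto\min_{v\in L_2}\vertiii{x-v}$ attains its maximum; this gives $\dist(L_1,L_2)=\max_{x\in L_1,\,\|x\|=1}\min_{v\in L_2}\vertiii{x-v}$.

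For the second equality I would apply Lemma~\ref{lemma.min.norm} with the norm $\vertiii{\cdot}$ (dual norm $\vertiii{\cdot}^*$), subspace $L_2$, and point $p=x$, obtaining $\min_{v\in L_2}\vertiii{x-v}=\max_{u\in L_2^\perp,\,\vertiii{u}^*=1}\ip{u}{x}$ for every $x$; substituting and merging the two maxima gives the middle expression. For the third equality, the key step is to invoke Lemma~\ref{lemma.min.norm} a second time, now with $\|\cdot\|^*$ in the role of the lemma's norm --- so that the lemma's dual norm becomes $\|\cdot\|^{**}=\|\cdot\|$ --- with subspace $L_1^\perp$ (whose orthogonal complement is $L_1$) and point $p=u$. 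This yields $\min_{y\in L_1^\perp}\|u-y\|^*=\max_{x\in L_1,\,\|x\|=1}\ip{x}{u}$ for every $u$; maximizing both sides over $u\in L_2^\perp$ with $\vertiii{u}^*=1$ and using symmetry of the inner product recovers the middle expression, closing the chain.

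Finally, for the ``furthermore'' claim I would not introduce anything new but simply track which inequalities are tight. The normalizations $\|\bar x\|=1$ and $\vertiii{\bar u}^*=1$ are forced by the homogeneity reduction and by the dual feasible set. Since $\bar u\in L_2^\perp$ and $\bar v\in L_2$ we have $\ip{\bar u}{\bar v}=0$, so the equality case of the ``furthermore'' clause in Lemma~\ref{lemma.min.norm} for $\min_{v\in L_2}\vertiii{\bar x-v}$ reads precisely $\vertiii{\bar x-\bar v}=\ip{\bar u}{\bar x-\bar v}=\ip{\bar u}{\bar x}$; similarly, using $\ip{\bar x}{\bar y}=0$ for $\bar x\in L_1$, $\bar y\in L_1^\perp$, the equality case for $\min_{y\in L_1^\perp}\|\bar u-y\|^*$ reads $\|\bar u-\bar y\|^*=\ip{\bar x}{\bar u-\bar y}=\ip{\bar u}{\bar x}$. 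The point that needs care --- and which I expect to be the main obstacle --- is the compatibility of the two applications of Lemma~\ref{lemma.min.norm}: one has to argue that a single pair $(\bar x,\bar u)$ can simultaneously be an outer maximizer of the first expression together with its dual certificate, and an outer maximizer of the third expression together with its primal certificate. This is obtained by chasing the equalities: a global maximizer $\bar x$ of $\max_{x\in L_1,\|x\|=1}\min_{v\in L_2}\vertiii{x-v}$ paired with the optimal $\bar u$ from the lemma satisfies $\ip{\bar u}{\bar x}=\dist(L_1,L_2)$, and H\"older's inequality \eqref{eq.holder} then forces $\bar u$ to be optimal in the third expression with $\bar x$ attaining its inner maximum. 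The remaining work --- the rescaling in the homogeneity step and the unwinding of \eqref{eq.holder} --- is routine.
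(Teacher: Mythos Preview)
Your proposal is correct and follows exactly the paper's approach: apply Lemma~\ref{lemma.min.norm} once with the norm $\vertiii{\cdot}$ and subspace $L_2$ to get the second equality, and once with the norm $\|\cdot\|^*$ (biduality giving $\|\cdot\|$ as its dual) and subspace $L_1^\perp$ to get the third. Your write-up is in fact more detailed than the paper's one-line proof, particularly in spelling out the homogeneity reduction and the compatibility of the two equality cases for the ``furthermore'' clause.
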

\begin{proof}
This follows by applying  Lemma~\ref{lemma.min.norm} to each of the norm minimization problems $\displaystyle \min_{v\in L_2} \vertiii{x-v}$ and $\displaystyle\min_{y\in L_1^\perp} \|u-y\|^*$.
\end{proof}

\begin{proposition}\label{prop.nu} 
If $L\in \Gr(V,m)$ and $L\cap \interior(K)\ne \emptyset$ then
\[
\nu(L) = 
\min_{u\in K^*, y\in L^\perp \atop \vertiii{u}^*=1} \|u-y\|^*=\min_{u\in K^* \atop \vertiii{u}^*=1} \max_{x\in L\atop \|x\|=1} \ip{u}{x}.
\]
Furthermore,  $\bar u\in K^*, \bar y\in L^\perp, \bar x\in L$ attain the above optimal values if and only if $ \vertiii{\bar u}^*=1, \|\bar x\| = 1, 
\ip{\bar u -\bar y}{\bar x} = \ip{\bar u}{\bar x} = \|\bar u-\bar y\|^*.$ 
\end{proposition}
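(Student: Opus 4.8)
The plan is to reduce the statement to Lemma~\ref{lemma.min.norm}, applied with the dual norm $\|\cdot\|^*$ in the role of $\|\cdot\|$ and with $L^\perp$ in the role of $L$. The first equality is immediate: by definition $\nu(L)=\min\{\|y-u\|^*: u\in K^*,\ y\in L^\perp,\ \vertiii{u}^*=1\}$, and $\|y-u\|^*=\|u-y\|^*$ for any norm. For the second equality I would fix $u\in K^*$ with $\vertiii{u}^*=1$ and apply Lemma~\ref{lemma.min.norm} to the norm-minimization $\min_{y\in L^\perp}\|u-y\|^*$: since $V$ is finite dimensional, $(\|\cdot\|^*)^*=\|\cdot\|$, and $(L^\perp)^\perp=L$, so the lemma gives $\min_{y\in L^\perp}\|u-y\|^*=\max_{x\in L,\ \|x\|=1}\ip{u}{x}$. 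Taking the minimum over $u\in K^*$ with $\vertiii{u}^*=1$ then yields the second displayed identity; this set is nonempty (because $K^*\neq\{0\}$, as $K$ is regular) and compact (a closed subset of the $\vertiii{\cdot}^*$-unit sphere), so the outer minimum is attained. The hypothesis $L\cap\interior(K)\neq\emptyset$ is not actually needed for these identities; it only guarantees, via the remark preceding the statement, that the common value $\nu(L)$ is positive.

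For the final claim, let $\bar u\in K^*$ with $\vertiii{\bar u}^*=1$ be a minimizer of the outer problem, so $\max_{x\in L,\ \|x\|=1}\ip{\bar u}{x}=\min_{y\in L^\perp}\|\bar u-y\|^*=\nu(L)$. Then $\bar y\in L^\perp$ and $\bar x\in L$ complete $\bar u$ to a triple attaining all the optimal values precisely when $\bar y$ attains $\min_{y\in L^\perp}\|\bar u-y\|^*$ and $\bar x$ attains $\max_{x\in L,\ \|x\|=1}\ip{\bar u}{x}$. By the characterization in the second part of Lemma~\ref{lemma.min.norm} (again with norm $\|\cdot\|^*$, subspace $L^\perp$, point $\bar u$), this happens exactly when $\|\bar x\|=1$ and $\|\bar u-\bar y\|^*=\ip{\bar x}{\bar u-\bar y}=\ip{\bar x}{\bar u}$. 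Since $\bar x\in L$ and $\bar y\in L^\perp$ are orthogonal, $\ip{\bar x}{\bar u-\bar y}=\ip{\bar x}{\bar u}=\ip{\bar u}{\bar x}$, so the condition becomes $\vertiii{\bar u}^*=1$, $\|\bar x\|=1$, and $\ip{\bar u-\bar y}{\bar x}=\ip{\bar u}{\bar x}=\|\bar u-\bar y\|^*$, as asserted.

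The argument is a routine specialization of Lemma~\ref{lemma.min.norm}, in the same spirit as Proposition~\ref{prop.dist}, so I do not expect a real obstacle — only the bookkeeping: one must consistently pass to the dual norm $\|\cdot\|^*$ and the complementary subspace $L^\perp$ (using reflexivity of finite-dimensional normed spaces), and keep track of which equalities in the last clause come from the norm-duality characterization and which come merely from $L\perp L^\perp$. One should also be mindful that the last clause characterizes the optimal $\bar y$ and $\bar x$ for a given outer minimizer $\bar u$; the stated equalities together with $\bar u\in K^*$ and $\vertiii{\bar u}^*=1$ encode exactly the optimality of $\bar y$ and $\bar x$ relative to such a $\bar u$.
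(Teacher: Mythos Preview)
Your proposal is correct and follows exactly the same approach as the paper: apply Lemma~\ref{lemma.min.norm} (with the dual norm $\|\cdot\|^*$ and the subspace $L^\perp$) to the inner minimization $\min_{y\in L^\perp}\|u-y\|^*$, and read off the optimality characterization from the second part of that lemma. The paper's proof is a single sentence to this effect; your version simply spells out the bookkeeping (reflexivity, $(L^\perp)^\perp=L$, attainment) and correctly notes that the hypothesis $L\cap\interior(K)\neq\emptyset$ is not used in the identities themselves.
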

\begin{proof}
This follows by applying Lemma~\ref{lemma.min.norm} to the norm minimization problem $\displaystyle\min_{y\in L^\perp} \|u-y\|^*$.
\end{proof}

\newpage
\noindent
{\em Proof of Theorem~\ref{thm:Grassmann.nu}.}
\begin{description}
\item[(a)]
First we show $\dist(L,\Sigma_m) \ge \nu(L).$ To do so, it suffices to show that  $\dist(L,\tilde L) \ge \nu(L)$ for all $\tilde L \in \Gr(V,m)$ with $\tilde L^\perp \cap K^* \ne \{0\}$.   Assume $\tilde L \in \Gr(V,m)$ is such that $\tilde L^\perp \cap K^* \ne \{0\}$.  Let $\bar u \in \tilde L^\perp \cap K^*$ with $\vertiii{\bar u}^* = 1.$  By Proposition~\ref{prop.dist}, it follows that
\begin{multline*}
\nu(L) = \min_{u\in K^*, y\in L^\perp \atop \vertiii{u}^*=1} \|u-y\|^* \le 
 \min_{y\in L^\perp} \|\bar u-y\|^* \le  
\max_{u\in \tilde L^\perp \atop \vertiii{u}^* =1 } \min_{y\in L^\perp} \|u-y \|^* \\ =  
 \dist(L,\tilde L).\end{multline*}

Next we show $\dist(L,\Sigma_m) \le \nu(L).$   To do so, it suffices to show that there exists $\tilde L \in \Gr(V,m)$ such that $\tilde L^\perp \cap K^* \ne \{0\}$ and $\dist(L,\tilde L) \le \nu(L).$   For simplicity, we write $\nu$ as shorthand for $\nu(L)$ is the rest of this proof.  
By Proposition~\ref{prop.nu} there exist $\bar x\in L, \bar u\in K^*,$ and $\bar y\in L^\perp$ such that $\vertiii{\bar u}^*=1, \|\bar x\|=1$ and 
$\nu= \ip{\bar u - \bar y}{\bar x}=\ip{\bar u }{\bar x}=\|\bar y-\bar u\|^*$.  Let $H:= \{x \in V: \ip{\bar u - \bar y}{x} = 0\}.$  Observe that $\bar x \not \in H$ because 
$\ip{\bar u - \bar y}{\bar x} = \|\bar y - \bar u\|^* > 0.$  Thus $L = \lspan(\{\bar x\} \cup (L \cap H))$.  Let $\bar v \in V$ be such that $\vertiii{\bar v} = 1$ and $\ip{\bar u}{\bar v}=\vertiii{\bar u}^*=1.$  Next, define
$$\tilde L:=\lspan(\{\bar x - \nu \bar v\}\cup(L \cap H)).$$
Since $\bar u - \bar y \in H^\perp \subseteq (H\cap L)^\perp$ and  $\bar y \in L^\perp \subseteq (H\cap L)^\perp$, we have $\bar u = \bar u - \bar y + \bar y \in (H\cap L)^\perp$; moreover, $\ip{\bar u}{\bar x - \nu \bar v} = \ip{\bar u}{\bar x} - \nu \ip{\bar u}{\bar v} = 0.$  Therefore $0 \ne \bar u \in \tilde L^\perp \cap K^*.$   To upper bound $\dist(L,\tilde L),$ assume $x\in L \setminus\{0\}.$  Then $x = \alpha \bar x + v$ for some $v\in H\cap L$.  We thus have
\[
\|x\| \ge \left|\ip{\frac{\bar y - \bar u}{\|\bar y - \bar u\|^*}}{x}\right| = |\alpha| \frac{\ip{\bar y - \bar u}{\bar x}}{\|\bar y - \bar u\|^*} = |\alpha|.
\] 
The second step follows because $\bar y - \bar u\in L^\perp.$  Put $\tilde x := \alpha(\bar x - \nu \bar v) + v$ and observe that $\vertiii{\tilde x - x} = 
|\alpha \nu| \vertiii{\bar v} = |\alpha| \nu.$ Hence
\[
\frac{\vertiii{\tilde x - x}}{\|x\|} = \frac{|\alpha| \nu}{\|x\|} \le \nu.
\]
Since this construction can be repeated for all $x\in L\setminus\{0\}$, it follows that
$$
\dist(L,\tilde L) = \max_{x\in L\atop x\ne 0} \min_{\tilde x \in \tilde L}  \frac{\vertiii{\tilde x - x}}{\|x\|}\le \nu.
$$

\item[(b)]
This follows via a similar, though simpler, reasoning to that used in part (a).  First, we show that $\odist(\Sigma_m,L) \ge \bar\nu(L).$ To do so, it suffices to show that  $\dist(\tilde L,L) \ge \bar\nu(L)$ for all $\tilde L \in \Gr(V,m)$ with $\tilde L \cap K \ne \{0\}$.   Assume $\tilde L \in \Gr(V,m)$ is such that $\tilde L \cap K \ne \{0\}$.  Let $\bar v \in \tilde L \cap K$ with $\bar v\ne 0.$ Thus
\begin{multline*}
\bar\nu(L) = \min_{v\in K, x\in L \atop \|x\|=1} \vertiii{x-v} \le 
\min_{x\in L \atop x\neq 0} \frac{\vertiii{x-\bar v}}{\|x\|} \le  
\max_{v\in \tilde L \atop v\ne 0 } \inf_{x\in L\atop x\ne 0} \frac{\vertiii{x-v}}{\|x\|} \\
= \odist(\tilde L,L).
 \end{multline*}
Next we show $\odist(\Sigma_m,L) \le \bar\nu(L).$   To do so, it suffices to show that there exists $\tilde L \in \Gr(V,m)$ such that $\tilde L \cap K \ne \{0\}$ and $\odist(\tilde L,L) \le \bar\nu(L).$   For simplicity, we write $\bar \nu$ as shorthand for $\bar \nu(L)$ is the rest of this proof.  
Let $\bar x\in L$ and $\bar v\in K$ be such that $\|\bar x\|=1$ and 
$\bar \nu= \vertiii{\bar x - \bar v}^*$. If $\bar v = 0$ then from \eqref{eq.upperbounds.dist} it readily follows that $\bar \nu = \min_{x\in L,\|x\|=1}\vertiii{x} \ge \odist(\tilde L,L)$ for any $\tilde L \in \Sigma_m$ and there is nothing to show.  Thus assume $\bar v \ne 0$.
Let $\bar y\in V$ be such that $\|\bar y\|^* = 1$ and $\ip{\bar y}{\bar x} = \|\bar x\|=1$. Let $H:= \{x \in V: \ip{\bar y}{x} = 0\}.$  Observe that $\bar x \not \in H$ because $\ip{\bar y}{\bar x} = 1.$  Thus $L = \lspan(\{\bar x\} \cup (L \cap H))$.   Next, define
$$\tilde L:=\lspan(\{\bar v\}\cup(L \cap H)).$$
By construction, we have $0\ne \bar v \in \tilde L \cap K.$ To upper bound $\odist(\tilde L,L),$ assume $\tilde x\in \tilde L \setminus\{0\}.$  Then $\tilde x = \alpha \bar v + v$ for some $v\in H\cap L$.  Put $x:=\alpha \bar x + v$ and observe that $
\|x\| \ge |\ip{\bar y}{x}| = |\alpha|
$ since $\|\bar y\|^*=1$ and $\ip{\bar y}{v} = 0$. Therefore when $\alpha \neq 0$, 
\[
\frac{\vertiii{\tilde x - x}}{\|x\|} = \frac{|\alpha| \vertiii{\bar x - \bar v}}{\|x\|} \le \frac{|\alpha|\bar \nu}{|\alpha|} \le \bar \nu. 
\]
On the other hand, when $\alpha = 0$ 
\[
\frac{\vertiii{\tilde x - x}}{\|x\|} =\frac{0}{\|v\|} = 0 \le \bar \nu.
\]
Since this construction can be repeated for all $\tilde x\in \tilde L\setminus\{0\}$, it follows that $$\odist(\tilde L,L)  = \max_{\tilde x \in \tilde L \atop \tilde x \ne 0} \inf_{x\in L \atop x\ne 0} \frac{\vertiii{\tilde x - x}}{\|x\|} \le \bar \nu.$$

\end{description}
\qed

\subsection{Renegar's distance to ill-posedness}
\label{sec.renegar}
We now relate the condition measures $\dist(\cdot), \odist(\cdot)$ with the now classical Renegar's distance to ill-posedness.  The key conceptual difference between Renegar's approach and the Grassmann approach is that Renegar~\cite{Rene95a,Rene95}  considers conic feasibility problems where the linear spaces $L$ and $L^\perp$ are defined via a linear mapping  $A:W\rightarrow V.$ We next present a variation (more precisely a slight extension) of Renegar's distance to ill-posedness.  

Assume $W$ is a finite dimensional Euclidean vector space with norm $| \cdot |$ and $\dim(W) < \dim(V)$.  For  a linear mapping $A:W\rightarrow V$ 
consider the conic systems \eqref{primal} and \eqref{dual} defined by taking $L=\Image(A)$. 
These two conic systems can respectively be written as 
\begin{equation}\label{renegar.primal}
Ax \in K \setminus\{0\}
\end{equation}
and \begin{equation}\label{renegar.dual} 
 A^*u = 0, \; u\in K^*  \setminus\{0\}.
\end{equation}
Here $A^*:V\rightarrow W$ denotes the {\em adjoint} operator of $A$, that is, the linear mapping satisfying $\ip{y}{Aw} = \ip{A^*y}{w}$ for all $y\in V, w\in W.$

Let $\mathcal L(W,V)$ denote the set of linear mappings from $W$ to $V$.  Define the operator norms $ \|\cdot\|, \vertiii{\cdot}: \mathcal L(W,V) \rightarrow \R $ as follows
\[
\|A\|:=\max_{w\in W\atop |w| = 1} \|Aw\| \; \text{ 
and } \; 
\vertiii{A} := \max_{w\in W\atop |w| = 1} \vertiii{Aw}.
\]

We say that $A \in \mathcal L(W,V)$ is {\em well-posed} if either \eqref{renegar.primal} or \eqref{renegar.dual} is feasible and remains so for sufficiently small  perturbations of $A$.  Let $\Sigma \subseteq \mathcal L(W,V)$ denote the set of {\em ill-posed mappings,} that is, mappings that are not well-posed.
It is easy to see that if $A\in \mathcal L(W,V) \setminus \Sigma$ then 
either $\Image(A)\cap \interior(K) \ne \emptyset$ or $\ker(A^*)\cap \interior(K^*) \ne \emptyset$.

Renegar's distance to ill-posedness $\Rdist:\mathcal L(W,V) \rightarrow \R$ is defined as:
\[
\Rdist(A,\Sigma) := \min\left\{\vertiii{A - \tilde A}: \tilde A \in \Sigma\right\}.
\]
This definition is a slight extension of Renegar's original definition in~\cite{Rene95a,Rene95} due to our use of the two norms $\|\cdot\|, \vertiii{\cdot}$ in $V$.  Renegar's original distance to ill-posedness corresponds to the case $\|\cdot\| = \vertiii{\cdot}$.

Renegar's distance to ill-posedness $\Rdist(A,\Sigma)$ can alternatively be defined as the {\em distance to infeasibility} of \eqref{renegar.primal} or \eqref{renegar.dual}.  More precisely, if $A\in \mathcal L(W,V)$ is such that \eqref{renegar.primal} is feasible, then
\[
\Rdist(A,\Sigma) := \min\left\{\vertiii{A - \tilde A}: \tilde A x \in K\setminus\{0\} \; \text{ is infeasible}\right\}.
\]
Likewise if  $A\in \mathcal L(W,V)$ is such that \eqref{renegar.dual} is feasible, then
\[
\Rdist(A,\Sigma) := \min\left\{\vertiii{A - \tilde A}: \tilde A^* u=0, \; u \in K^*\setminus\{0\} \; \text{ is infeasible}\right\}.
\]
Observe that if $A\in \mathcal L(W,V) \setminus \Sigma$ and $\Image(A)\cap \interior(K) \ne \emptyset$ then 
\[
\Rdist(A,\Sigma) = \min\left\{\vertiii{A-\tilde A}: \tilde A^* u = 0 \; \text{ for some } u \in K^*\setminus \{0\}\right\}.
\]
On the other hand, if  $A\in \mathcal L(W,V) \setminus \Sigma$ and $\Image(A)^\perp \cap \interior(K^*) \ne \emptyset$ then  
\[
\Rdist(A,\Sigma) = \min\left\{\vertiii{A-\tilde A}: \tilde A x \in  K \; \text{ for some } x \in W\setminus \{0\}\right\}.
\]
We shall rely on one more piece of notation.  Given $A  \in \mathcal L(W,V)$  the mapping $A^{-1}: \Image(A)\rightarrow W$ is well defined provided $A$ is injective.  In this case let $\|A^{-1}\|$ denote the following operator norm
\[
\|A^{-1}\|:=\max_{x\in \Image(A)\atop \|x\| = 1} |A^{-1}x|.
\]

The following result gives a general version of \cite[Theorem 1.4]{AmelB12}.  We note that the  statements of these theorems are nearly identical. 
\begin{theorem} 
\label{thm.renegar}
Assume $A \in \mathcal L(W,V)\setminus \Sigma$ is injective and $L := \Image(A)\in \Gr(V,m).$ 

\begin{description} 
\item[(a)] 
If $L \cap \interior(K) \ne \emptyset$ then
\[
\frac{1}{\|A\|} \le \frac{\dist(L,\Sigma_m)}{\Rdist(A,\Sigma)} \le \|A^{-1}\|,
\]
or equivalently
\[
 \frac{1}{\dist(L,\Sigma_m)} \le \frac{\|A\|}{\Rdist(A,\Sigma)}\le  \frac{\kappa(A)}{\dist(L,\Sigma_m)}
\]
where $\kappa(A) = \|A\| \cdot \|A^{-1}\|.$ 
\item[(b)] 
If $L^\perp \cap \interior(K^*) \ne \emptyset$ then
\[
\frac{1}{\|A\|} \le \frac{\odist(\Sigma_m,L)}{\Rdist(A,\Sigma)} \le \|A^{-1}\|,
\]
or equivalently
\[
 \frac{1}{\odist(\Sigma_m,L)} \le \frac{\|A\|}{\Rdist(A,\Sigma)}\le  \frac{\kappa(A)}{\odist(\Sigma_m,L)}.
\]
\end{description}
\end{theorem}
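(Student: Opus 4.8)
Both parts will follow the same pattern: rather than perturbing the subspace $L$ inside the Grassmann manifold, I reduce each side of the desired inequality to a scalar optimization problem and then compare the two problems term by term through the operator norms $\|A\|$ and $\|A^{-1}\|$. The Grassmann side is already in the right shape thanks to Theorem~\ref{thm:Grassmann.nu} and Proposition~\ref{prop.nu}; the work is in rewriting Renegar's distance.

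\emph{Step 1 (a closed form for $\Rdist$).} In case (a), where $\Image(A)\cap\interior(K)\ne\emptyset$, I start from the characterization $\Rdist(A,\Sigma)=\min\{\vertiii{A-\tilde A}:\tilde A^*u=0\text{ for some }u\in K^*\setminus\{0\}\}$ recorded in Section~\ref{sec.renegar}. Fixing $u$ and setting $\Delta=A-\tilde A$, the constraint reads $\Delta^*u=A^*u$; Hölder's inequality~\eqref{eq.holder} gives $\vertiii{\Delta}\ge\langle u,\Delta w\rangle/\vertiii{u}^*$ for every unit $w\in W$, hence $\vertiii{\Delta}\ge|A^*u|/\vertiii{u}^*$, and equality is achieved by the rank-one map $\Delta w=\langle w,A^*u\rangle\,\zeta/\vertiii{u}^*$ with $\vertiii{\zeta}=1$ and $\langle\zeta,u\rangle=\vertiii{u}^*$. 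This yields $\Rdist(A,\Sigma)=\min\{|A^*u|:u\in K^*,\ \vertiii{u}^*=1\}$. In case (b) the same idea applied to $\Rdist(A,\Sigma)=\min\{\vertiii{A-\tilde A}:\tilde Ax\in K\text{ for some }x\in W\setminus\{0\}\}$, using the rank-one perturbation $\Delta w=\langle w,x\rangle\,(Ax-v^*)/|x|^2$ with $v^*\in\argmin_{v\in K}\vertiii{Ax-v}$ and homogeneity to normalize $|x|=1$, gives $\Rdist(A,\Sigma)=\min\{\vertiii{Ax-v}:x\in W,\ |x|=1,\ v\in K\}$.

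\emph{Step 2 (matching the two optimizations).} For (a), Theorem~\ref{thm:Grassmann.nu} and Proposition~\ref{prop.nu} give $\dist(L,\Sigma_m)=\nu(L)=\min\{\max_{x\in L,\,\|x\|=1}\langle u,x\rangle:u\in K^*,\ \vertiii{u}^*=1\}$. Now fix $u$ with $\vertiii{u}^*=1$: writing an arbitrary $x\in L$ as $x=Aw$ and using $\|Aw\|\le\|A\|\,|w|$ and $|w|=|A^{-1}(Aw)|\le\|A^{-1}\|\,\|Aw\|$, one sandwiches $|A^*u|/\|A\|\le\max_{x\in L,\,\|x\|=1}\langle u,x\rangle\le\|A^{-1}\|\,|A^*u|$. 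Taking the minimum over admissible $u$ and substituting the formula from Step 1 gives $\Rdist(A,\Sigma)/\|A\|\le\dist(L,\Sigma_m)\le\|A^{-1}\|\,\Rdist(A,\Sigma)$, which is part (a). For (b), Theorem~\ref{thm:Grassmann.nu} gives $\odist(\Sigma_m,L)=\bar\nu(L)=\min\{\vertiii{x-v}:x\in L,\ \|x\|=1,\ v\in K\}$. Fix $x\in W$ with $|x|=1$ and put $z=Ax\in L\setminus\{0\}$; since $K$ is a cone, $\min_{v\in K}\vertiii{z-v}=\|z\|\cdot\min_{v\in K}\vertiii{z/\|z\|-v}$, and the bounds $1/\|A^{-1}\|\le\|z\|\le\|A\|$ together with the observation that $x\mapsto Ax/\|Ax\|$ sends the unit sphere of $W$ onto the $\|\cdot\|$-unit sphere of $L$ give $\bar\nu(L)/\|A^{-1}\|\le\Rdist(A,\Sigma)\le\|A\|\,\bar\nu(L)$, i.e.\ part (b). In both cases the ``equivalently'' form is obtained by dividing through and using $\kappa(A)=\|A\|\cdot\|A^{-1}\|$.

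\emph{Main obstacle.} The delicate step is Step 1: showing that Renegar's distance, a priori an infimum over the entire operator space $\mathcal L(W,V)$, is realized exactly by a rank-one perturbation and equals the explicit quantity $\min_u|A^*u|/\vertiii{u}^*$ (respectively $\min_{x,v}\vertiii{Ax-v}/|x|$). The lower bound must pair the operator norm $\vertiii{\cdot}$ with the \emph{dual} norm $\vertiii{\cdot}^*$ correctly, and the perturbation produced must genuinely land in $\Sigma$ --- but this is precisely what the two characterizations of $\Rdist$ quoted in Section~\ref{sec.renegar} guarantee. Once Step 1 is done, Step 2 is only a change of variables together with the elementary inequalities $\|Aw\|\le\|A\|\,|w|$ and $|w|\le\|A^{-1}\|\,\|Aw\|$.
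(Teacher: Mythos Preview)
Your argument is correct and is organized somewhat differently from the paper's.  You first extract a closed ``singular--value'' formula for Renegar's distance (Step~1) and then sandwich it against the corresponding formula for $\nu(L)$ or $\bar\nu(L)$ using only $\|Aw\|\le\|A\|\,|w|$ and $|w|\le\|A^{-1}\|\,\|Aw\|$ (Step~2).  The paper instead treats the two inequalities in each part by separate devices: for the bound $\Rdist(A,\Sigma)\le\|A\|\,\dist(L,\Sigma_m)$ it builds the same rank-one perturbation you use in Step~1, but for the bound $\Rdist(A,\Sigma)\ge\dist(L,\Sigma_m)/\|A^{-1}\|$ it never writes down the closed form for $\Rdist$; it starts from an arbitrary $\tilde A$ with $\ker(\tilde A^*)\cap K^*\ne\{0\}$ (resp.\ $\Image(\tilde A)\cap K\ne\{0\}$), forms $\tilde L=\Image(\tilde A)$, and estimates $\dist(L,\tilde L)$ (resp.\ $\odist(\tilde L,L)$) directly.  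Your route is more symmetric and buys a reusable identity for $\Rdist$; the paper's route avoids needing that identity at the price of a less unified argument.

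One notational slip to fix: in Step~1(a) the quantity $\sup_{|w|=1}\langle u,\Delta w\rangle=\sup_{|w|=1}\langle A^*u,w\rangle$ equals the \emph{dual} norm $|A^*u|^*$, not $|A^*u|$, so the correct formula is $\Rdist(A,\Sigma)=\min\{\,|A^*u|^*:u\in K^*,\ \vertiii{u}^*=1\,\}$.  Likewise in Step~1(b), the rank-one map $\Delta w=\langle w,x\rangle(Ax-v^*)/|x|^2$ has operator norm $|x|^*\,\vertiii{Ax-v^*}/|x|^2$, which need not equal $\vertiii{Ax-v^*}$ when $|\cdot|$ is non-Euclidean; replace the inner product with a linear functional $z$ satisfying $|z|^*=1$ and $\langle z,x\rangle=|x|$ (exactly as the paper does) and the computation goes through.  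These are cosmetic repairs; the structure of your proof is sound.
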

\begin{proof}
\begin{description}
\item[(a)]
First, we prove $\Rdist(A,\Sigma) \le \nu(L) \|A\| =  \dist(L,\Sigma_m) \|A\|.$  To that end, assume $\bar u \in K^*, \vertiii{\bar u}^* =1$ is such that
$
\nu(L) = \displaystyle\max_{x\in L \atop \|x\|=1} \ip{\bar u}{x}
$ as in Proposition~\ref{prop.nu}.
Then
\[
\nu(L) \|A\| \ge \max_{w\in W \atop |w|=1} \ip{\bar u}{Aw} = |A^*\bar u|^*.
\]
Let $\bar v\in V, \vertiii{\bar v} = 1$ be such that $\ip{\bar u}{\bar v} = \vertiii{\bar u}^* =1$.  
Now construct $\Delta A: W \rightarrow V$ as follows
\[
\Delta A(w):= - \ip{A^* \bar u}{w} \bar v.
\]
Observe that $\vertiii{\Delta A} = |A^*u|^* \cdot \vertiii{\bar v} \le \nu(L)\|A\|,$ and $\Delta A^*: V \rightarrow W$ is defined by
\[
\Delta A^*(y) = - \ip{y}{\bar v} A^*\bar u.
\]
In particular $(A+ \Delta A)^*\bar u = A^* \bar u - \ip{\bar u}{\bar v} A^*\bar u = 0$ and $\bar u \in K^*\setminus \{0\}.$  Therefore
\[
\Rdist(A,\Sigma_m) \le \vertiii{\Delta A} \le \nu(L)\|A\| = \dist(L,\Sigma_m) \|A\|.
\]
For the second inequality, assume  
$\tilde A\in \mathcal L(W,V)$ is such that $\ker(\tilde A^*) \cap K^*\setminus\{0\} \ne \emptyset.$   By performing a slight perturbation on $\tilde A$ if needed, we may assume that $\tilde L \in \Gr(V,m)$.  Thus
\[
\dist(L,\Sigma_m) \le \dist(L,\tilde L).
\]
 Assume $\bar x \in L$ with $\|\bar x\|=1$ is such that 
\[
\dist(L,\tilde L) = \min_{\tilde x\in \tilde L} \vertiii{\bar x-\tilde x}. 
\]
Let $\bar w \in W$ be such that $A \bar w = \bar x$. Since $\tilde A \bar w \in \tilde L$, it follows that
\[
\dist(L,\tilde L) \le  \vertiii{(A - \tilde A)\bar w} \le  \vertiii{(A - \tilde A)}\cdot |\bar w|.
\]
Thus 
\[
\vertiii{A - \tilde A} \ge \frac{\dist(L,\tilde L)}{|w|} \ge \frac{\dist(L,\Sigma_m)}{\|A^{-1}\|}.
\]
Since this holds for all $\tilde A$ with $\ker(\tilde A) \cap K^*\setminus\{0\}$, we conclude that
\[
\Rdist(A,\Sigma_m) \ge \frac{\dist(L,\Sigma_m)}{\|A^{-1}\|}.
\]
\item[(b)]
First, we prove $\Rdist(A,\Sigma_m) \le \bar\nu(L) \|A\| =  \odist(\Sigma_m,L) \|A\|.$  To that end, assume $\bar v \in K, \bar x \in L$ are such that $\|\bar x\|=1$ and $\bar \nu = \vertiii{\bar x - \bar v}.$  

Let $\bar w := A^{-1}\bar x$ and $\bar z \in W$ be such that $|\bar z|^* = 1, \; \ip{\bar z}{\bar w} = |\bar w|$.  Observe that $|\bar w| \ge \frac{1}{\|A\|}$ since $1=\|x\| = \|A\bar w\| \le \|A\| |\bar w|.$  Define $\Delta A:W\rightarrow V$ as follows
\[
\Delta A(w) := (\bar v - \bar x) \frac{\ip{\bar z}{w}}{|\bar w|}.
\]
Observe that $(A+\Delta A)\bar w = \bar v \in K$ and 
$\vertiii{\Delta A} = \frac{\vertiii{\bar v - \bar x}}{|\bar w|} \le \bar \nu \|A\|$.  Thus $\Rdist(A,\Sigma_m) \le \vertiii{\Delta A}\le \bar\nu(L) \|A\|=  \odist(\Sigma_m,L) \|A\|.$
For the second inequality assume  assume  
$\tilde A\in \mathcal L(W,V)$ is such that  $\Image(\tilde A) \cap K \setminus\{0\}\ne \emptyset. $  By performing a slight perturbation of $\tilde A$ if needed, we may assume that $\tilde L:=\Image(\tilde A) \in \Gr(V,m)$.  Thus
\[
\odist(\Sigma_m,L) \le \odist(\tilde L,L).
\]
Assume $0\ne \tilde x \in \tilde L$ is such that 
\[
\odist(\tilde L,L) = \inf_{x\in L}\frac{\vertiii{\tilde x-x}}{\| x\|}. 
\]
Let $ w \in W$ be such that $\tilde A  w = \tilde x$. Since $Aw \in L$ we have
\[
\odist(\tilde L,L) \le \frac{\vertiii{\tilde A w-Aw}}{\|Aw\|} \le 
\frac{\vertiii{\tilde A - A} |w|}{\|Aw\|}. 
\]
Hence
\[
\vertiii{\tilde A - A} \ge  \frac{\odist(\tilde L,L) \|Aw\|}{|w|} 
= \frac{\odist(\tilde L,L) \|Aw\|}{|A^{-1}(Aw)|} \ge \frac{\odist(\Sigma_m,L)}{\|A^{-1}\|}.\]
Since this holds for all $\tilde A$ with $\Image(\tilde A) \cap K\setminus\{0\}\ne \emptyset$,   we conclude that
\[
\Rdist(A,\Sigma_m) \ge \frac{\odist(\Sigma_m,L)}{\|A^{-1}\|}.
\]

\end{description}
\end{proof}



\section{Euclidean and induced norms}
\label{sec.norms}

We next discuss the special but important choices of norms, namely the Euclidean norm defined by the inner product in $V$ and a certain {\em induced norm} defined by the cone $K$ and a point $e\in \interior(K)$. 
\subsection{Euclidean norm}
\label{sec.euclidean.norm}

The Euclidean norm $\|\cdot\|_2$ is the norm defined by the inner product in $V$.  That is, for $v\in V$
\[
\|v\|_2 := \sqrt{\ip{v}{v}}.
\]
The Euclidean norm is naturally related to angles.  Given $x,y\in V\setminus\{0\}$ define $\angle(x,y) := \arccos \frac{\ip{x}{y}}{\|x\|_2\|y\|_2} \in [0,\pi]$.  Given a linear subspace $L\subseteq V$ and a closed convex cone $C\subseteq V$ define
\[
\angle(L,C) := \min\{\angle(x,v): x \in L\setminus\{0\} \; v \in C\setminus\{0\}\} \in [0,\pi/2].
\]

\begin{proposition}  \label{prop.euclidean} Assume $\|\cdot\| = \vertiii{\cdot} = \|\cdot\|_2.$ Then 
\begin{description}
\item[(a)]
For $L_1,L_2\in \Gr(V,m)$
\[
 \|\Pi_{L_1} - \Pi_{L_2}\|_2 = \dist(L_1,L_2) = \odist(L_1,L_2) 
\]
where $\Pi_{L_i}:V \rightarrow L_i$ is the  orthogonal projection onto $L_i$ for $i=1,2$ and $\|\Pi_{L_1} - \Pi_{L_2}\|_2 = \max_{\|x\|_2=1} \|(\Pi_{L_1} - \Pi_{L_2})x\|_2.$ 
\item[(b)] If $L\in \Gr(V,m)$ is such that $L\cap \interior(K) \ne \emptyset$ then 
\[
\nu(L) = \sin\angle(L^\perp,K^*).
\]
\item[(c)] If $L\in \Gr(V,m)$ is such that $L^\perp\cap \interior(K^*) \ne \emptyset$ then 
\[
\bar \nu(L) = \sin\angle(L,K).
\]
\end{description}
\end{proposition}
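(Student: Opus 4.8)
The plan is to specialize the general machinery of Section~\ref{sec.Grassmann} to the Euclidean case, where self-duality of $\|\cdot\|_2$ and orthogonal projections do most of the work. For part (a), the key observation is that for the Euclidean norm, $\min_{v \in L_2}\|x-v\|_2 = \|x - \Pi_{L_2}x\|_2 = \|(\Pi_{L_1}-\Pi_{L_2})x\|_2$ whenever $x \in L_1$, so that
\[
\dist(L_1,L_2) = \max_{x \in L_1, \|x\|_2=1} \|(\Pi_{L_1}-\Pi_{L_2})x\|_2.
\]
I would then argue that this maximum over the unit sphere of $L_1$ equals the maximum over the whole unit sphere of $V$, i.e.\ equals $\|\Pi_{L_1}-\Pi_{L_2}\|_2$; the standard fact that $\|\Pi_{L_1}-\Pi_{L_2}\|_2$ is symmetric in $L_1,L_2$ (a classical result about differences of orthogonal projectors, provable via $(\Pi_{L_1}-\Pi_{L_2})^2 = \Pi_{L_1^\perp}\Pi_{L_2^\perp} + \Pi_{L_1}\Pi_{L_2}$ being block-diagonal, or just by noting $\|M\|_2 = \|M^{\mathsf T}\|_2$ and $\Pi_{L_i}$ is self-adjoint) then gives the remaining equality $\dist(L_1,L_2) = \dist(L_2,L_1)$. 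To handle $\odist$, I would invoke Proposition~\ref{prop.dist}, which already expresses $\dist(L_1,L_2)$ as $\max_{x \in L_1, u \in L_2^\perp, \|x\|_2=\|u\|_2=1}\ip{u}{x}$ (using $\|\cdot\|^* = \|\cdot\|_2$); the symmetric expression for $\odist$ follows the same pattern via Lemma~\ref{lemma.min.norm} applied to the infimum over $v \in L_2\setminus\{0\}$, and in the Euclidean case the infimum defining $\odist$ is actually attained, so the two bilinear characterizations coincide.

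For parts (b) and (c), I would apply Proposition~\ref{prop.nu} (resp.\ its analogue). Proposition~\ref{prop.nu} gives
\[
\nu(L) = \min_{u \in K^*, \|u\|_2 = 1}\ \max_{x \in L, \|x\|_2 = 1}\ \ip{u}{x},
\]
and the inner maximum is exactly $\|\Pi_L u\|_2 = \cos\angle(u,L)$, the cosine of the angle between $u$ and its best approximation in $L$. So $\nu(L) = \min_{u \in K^*\setminus\{0\}} \cos\angle(u,L)$. The remaining step is the trigonometric identity relating the angle from $u$ to $L$ with the angle from $u$ to $L^\perp$: for any unit $u$, $\|\Pi_L u\|_2^2 + \|\Pi_{L^\perp}u\|_2^2 = 1$, hence $\cos\angle(u,L) = \sin\angle(u,L^\perp)$. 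Therefore $\nu(L) = \min_{u \in K^*\setminus\{0\}}\sin\angle(u,L^\perp) = \sin\angle(L^\perp, K^*)$, using that $t \mapsto \sin t$ is increasing on $[0,\pi/2]$ and $\angle(L^\perp,K^*) = \min_u \angle(u, L^\perp)$ is achieved on the closed set $K^*\cap\S^{n-1}$ (nonempty since $K$ is regular). Part (c) is the same argument with the roles of primal and dual swapped, using the analogue of Proposition~\ref{prop.nu} for $\bar\nu$ (which the authors would need to have — otherwise I would run the direct computation $\bar\nu(L) = \min_{v\in K, \|x\|_2=1}\|x-v\|_2$, note the minimizing $x$ for fixed direction $v$ is the normalized projection $\Pi_L v/\|\Pi_L v\|_2$ giving $\sqrt{1 - \cos^2\angle(v,L)} = \sin\angle(v,L)$, then minimize over $v \in K$).

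The only genuinely delicate point is justifying that in the Euclidean case the supremum/infimum defining $\odist$ is attained and equals the bilinear form, so that $\odist = \dist$ — one must check the degenerate configurations (e.g.\ $L_1 \cap L_2^\perp \ne \{0\}$, where the infimum over $v \ne 0$ approaches but may not attain its value) do not break the equality; here self-duality saves the day because the value is $1$ on both sides. Everything else is a routine unpacking of the already-proven propositions together with the Pythagorean relation $\|\Pi_L u\|_2^2 + \|\Pi_{L^\perp}u\|_2^2 = \|u\|_2^2$ and monotonicity of $\sin$ on $[0,\pi/2]$. I expect the proof to be short: three or four lines per item once Propositions~\ref{prop.dist} and~\ref{prop.nu} are in hand.
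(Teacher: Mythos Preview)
Your treatment of parts (b) and (c) is correct and essentially equivalent to the paper's, just phrased through the complementary-angle identity $\cos\angle(u,L)=\sin\angle(u,L^\perp)$ rather than the paper's one-line observation that for $\|u\|_2=1$ one has $\min_{y\in L^\perp}\|u-y\|_2 = \sin\angle(u,L^\perp)$, followed by the definition of $\nu(L)$. Both routes are short and equally valid.

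Part (a) is where your proposal diverges from the paper and also where it is incomplete. The paper does \emph{not} argue via projection identities; it invokes the classical principal-angle formula $\|\Pi_{L_1}-\Pi_{L_2}\|_2=\sin\theta$, where $\theta=\max_{x\in L_1\setminus\{0\}}\min_{v\in L_2\setminus\{0\}}\angle(x,v)$ is the largest principal angle (citing Golub--Van Loan), and then reads off directly that
\[
\sin\theta=\max_{x\in L_1,\,x\ne0}\min_{v\in L_2}\frac{\|x-v\|_2}{\|x\|_2}=\dist(L_1,L_2)
\quad\text{and}\quad
\sin\theta=\max_{x\in L_1,\,x\ne0}\inf_{v\in L_2,\,v\ne0}\frac{\|x-v\|_2}{\|v\|_2}=\odist(L_1,L_2).
\]
Your approach instead computes $\dist(L_1,L_2)=\max_{x\in L_1,\|x\|_2=1}\|(\Pi_{L_1}-\Pi_{L_2})x\|_2$ and then asserts this restricted maximum equals the full operator norm $\|\Pi_{L_1}-\Pi_{L_2}\|_2$. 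That assertion is true when $\dim L_1=\dim L_2$, but it is precisely the nontrivial content of the principal-angle/CS decomposition you are trying to avoid; the self-adjointness remark only gives the tautology $\|\Pi_{L_1}-\Pi_{L_2}\|_2=\|\Pi_{L_2}-\Pi_{L_1}\|_2$, not that the max over $L_1$ suffices. More seriously, your handling of $\odist$ has a gap: Proposition~\ref{prop.dist} and Lemma~\ref{lemma.min.norm} give a bilinear form for $\dist$, but there is no analogous statement for $\odist$ in the paper, and Lemma~\ref{lemma.min.norm} does not apply to $\inf_{v\ne0}\|x-v\|_2/\|v\|_2$. The paper sidesteps this entirely by the geometric identity $\inf_{v\in L_2,\,v\ne0}\|x-v\|_2/\|v\|_2=\sin\angle(x,L_2)$ for $x\ne0$, which follows from elementary trigonometry and makes the $\odist$ equality immediate. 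If you want to keep your projection-based route for $\dist$, you still need either this identity or an independent derivation of a bilinear form for $\odist$ to close the argument.
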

\begin{proof}
\begin{description}
\item[(a)] 
A classical result from geometry (see, e.g., \cite[Section 12.4]{GoluV96}) states that $\|\Pi_{L_1} - \Pi_{L_2}\|_2 = \sin(\theta)$ where $\theta$ is the {\em largest principal angle} between $L_1$ and $L_2$, namely
\[
\theta = \max_{x\in L_1\atop x\ne 0} \min_{v \in L_2\atop v \ne 0} \angle(x,v) 
\in [0,\pi/2].
\]
Since $\theta \in [0,\pi/2]$ we have
\begin{align*}
\|\Pi_{L_1} - \Pi_{L_2}\|_2 
&= \max_{x\in L_1\atop x\ne 0} \min_{v \in L_2\atop v \ne 0} \sin(\angle(x,v)) \\
&= \max_{x\in L_1\atop x\ne 0} \min_{v \in L_2} \frac{\|x-v\|_2}{\|x\|_2}\\
& = \dist(L_1,L_2),
\end{align*}
and 
\begin{align*}
\|\Pi_{L_1} - \Pi_{L_2}\|_2 
&= \max_{x\in L_1\atop x\ne 0} \min_{v \in L_2\atop v \ne 0} \sin(\angle(x,v)) 
\\
&= \max_{x\in L_1\atop x\ne 0} \inf_{v \in L_2\atop v\ne 0} \frac{\|x-v\|_2}{\|v\|_2} \\
&= \odist(L_1,L_2).
\end{align*}
\item[(b)] 
Since $\angle(L^\perp,K^*) \in [0,\pi/2]$ we have
\[
\sin\angle(L^\perp,K^*) = \min_{u\in K^*,y\in L^\perp \atop u,y\ne 0}  \sin \angle(y,u) 
= \min_{u\in K^*,y\in L^\perp\atop \|u\|_2=1}  \|u-y\|_2 
= \nu(L).
\]
\item[(c)] 
This is nearly identical to part (b):  Since $\angle(L,K) \in [0,\pi/2]$ we have
\[
\sin\angle(L,K) = \min_{v\in K,x\in L\atop v,x\ne 0}  \sin \angle(x,v) 
= \min_{v\in K,x\in L\atop \|x\|_2=1}  \|v-x\|_2 
= \bar\nu(L).
\]
\end{description}
\end{proof}

Proposition~\ref{prop.euclidean} readily implies the symmetry of $\dist$ and $\odist$.  That is,  $\dist(L_1,L_2) = \dist(L_2,L_1)$ for all $L_1,L_2 \in \Gr(V,m)$, and likewise for $\odist(\cdot)$ if $\|\cdot\| = \vertiii{\cdot} = \|\cdot\|_2$.  Furthermore, Proposition~\ref{prop.euclidean} also implies 
that in this case $\dist(\cdot)$ coincides with the Grassmann distance 
used in \cite{AmelB12}.  Hence for this special choice of norms, Theorem~\ref{thm.renegar} subsumes \cite[Theorem 1.4]{AmelB12}.  Similarly, for this special choice of norms, Theorem~\ref{thm:Grassmann.nu} and Proposition~\ref{prop.euclidean} subsume \cite[Proposition 1.6]{AmelB12}.

\subsection{Induced norm and induced eigenvalue mappings}
\label{sec.induced.norm}
Assume $K \subseteq V$ is a regular closed convex cone and $e\in \interior(K)$.  We next describe a norm $\|\cdot\|_e$ in $V$ and a mapping $\lambda_{e}:V\rightarrow \R$ induced by the pair $(K,e)$.  These norm and mapping yield a natural interpretation of $\nu(L)$ as a measure of the {\em most interior} normalized solution to the feasibility problem $x \in L\cap \interior(K)$ when this problem is feasible.  

Define the norm $\|\cdot\|_e$ in $V$ induced by $(K,e)$ as follows (see \cite{CCPMultifold})
\[
\|x\|_e:=\min\{\alpha \ge 0: x+\alpha e \in K, \; -x+\alpha e \in K\}.
\]
Define the {\em eigenvalue} mapping $\lambda_{e}:V \rightarrow \R$ induced by $(K,e)$ as follows
\[
\lambda_{e}(x):=\max\{t\in \R: x-te \in K\}.
\]
Observe that $x \in K \Leftrightarrow \lambda_{e}(x) \ge 0$ and $x \in \interior(K) \Leftrightarrow \lambda_{e}(x) > 0.$  Furthermore, observe that when $x \in K$
\[
\lambda_e(x) = \max\{r\ge 0: \|v\|_e \le r \Rightarrow x+v\in K\}.
\]
Thus for $x\in K$, $\lambda_e(x)$ is a measure of how interior $x$ is in the cone $K$.  

It is easy to see that  $\|u\|_e^* = \ip{u}{e}$ for $u\in K^*$.  In analogy to the standard simplex, let
\[
\Delta(K^*):=\{u\in K^*: \|u\|_e^*=1\} = \{u\in K^*:\ip{u}{e}=1\}.
\]
It is also easy to see that the  eigenvalue mapping $\lambda_{e}$ has the following alternative expression
\[
\lambda_{e}(x) = \min_{u\in \Delta(K^*)} \ip{u}{x}.
\]
The following result readily follows from Proposition~\ref{prop.nu} and convex duality.

\begin{proposition}\label{prop.eigenvalue}
Let $K \subseteq V$ be a regular closed convex cone and $e\in \interior(K)$.  If $\vertiii{\cdot} = \|\cdot\|_e$, then for all $L\in \Gr(V,m)$
\[
\nu(L)= \min_{u\in \Delta(K^*)} \max_{x\in L\atop \|x\| \le 1} \ip{x}{u} = 
\max_{x\in L\atop \|x\| \le 1} \min_{u\in \Delta(K^*)} \ip{x}{u} = 
\max_{x\in L\atop \|x\| \le 1}  \lambda_{e}(x). 
\]
In particular, when $L \cap \interior(K) \ne \emptyset$ the quantity $\nu(L)$ can be seen as a measure of the {\em most} interior normalized point in $L\cap  \interior(K)$.
\end{proposition}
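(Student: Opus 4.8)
The plan is to reduce the statement to Lemma~\ref{lemma.min.norm} together with a minimax interchange. First I would use the identity $\|u\|_e^* = \ip{u}{e}$, valid for $u\in K^*$, recorded just above: once $\vertiii{\cdot}=\|\cdot\|_e$, the constraint set $\{u\in K^*:\vertiii{u}^*=1\}$ appearing in the definition of $\nu(L)$ is exactly $\Delta(K^*)$, so $\nu(L)=\min_{u\in\Delta(K^*)}\min_{y\in L^\perp}\|u-y\|^*$. Note this holds for \emph{every} $L\in\Gr(V,m)$, which is why I would work from the definition of $\nu(L)$ and Lemma~\ref{lemma.min.norm} directly rather than quoting Proposition~\ref{prop.nu}, whose statement carries the extra hypothesis $L\cap\interior(K)\ne\emptyset$.

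Next I would apply Lemma~\ref{lemma.min.norm} with the subspace taken to be $L^\perp$, the ambient norm taken to be $\|\cdot\|^*$ (whose dual norm is $\|\cdot\|$), and the point $p=u$; this gives $\min_{y\in L^\perp}\|u-y\|^*=\max_{x\in L,\ \|x\|=1}\ip{x}{u}$ for each fixed $u$. Since $\ip{\cdot}{u}$ is linear, its maximum over the unit ball of $L$ is attained on the unit sphere (and equals $0$, attained on the sphere, when $u\perp L$), so the constraint $\|x\|=1$ may be relaxed to $\|x\|\le 1$, yielding the first displayed equality $\nu(L)=\min_{u\in\Delta(K^*)}\max_{x\in L,\ \|x\|\le 1}\ip{x}{u}$. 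For the middle equality I would invoke a minimax theorem (von Neumann's suffices, since the payoff $(x,u)\mapsto\ip{x}{u}$ is bilinear and both feasible sets are convex and compact): $\{x\in L:\|x\|\le 1\}$ is obviously so, and $\Delta(K^*)$ is convex and compact because $e\in\interior(K)$ forces $\ip{u}{e}>0$ for all $u\in K^*\setminus\{0\}$, making $\Delta(K^*)$ a bounded base of the regular cone $K^*$. This lets me swap $\min$ and $\max$. The final equality is then immediate from the alternative expression $\lambda_{e}(x)=\min_{u\in\Delta(K^*)}\ip{u}{x}$ established right before the proposition.

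For the ``in particular'' assertion, when $L\cap\interior(K)\ne\emptyset$ a maximizer $\bar x\in L$, $\|\bar x\|\le 1$, of the last expression has $\lambda_{e}(\bar x)=\nu(L)>0$; recalling that $\lambda_{e}(\bar x)=\max\{r\ge 0:\|v\|_e\le r\Rightarrow \bar x+v\in K\}$, this means $\bar x$ is exactly the normalized point of $L$ around which the largest $\|\cdot\|_e$-ball is still contained in $K$, i.e. the ``most interior'' such point. I expect no real obstacle here beyond bookkeeping; the only points needing a little care are the relaxation from the unit sphere to the unit ball and the verification that the hypotheses of the minimax theorem hold, chiefly the compactness of $\Delta(K^*)$.
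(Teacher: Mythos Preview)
Your proposal is correct and follows essentially the same route as the paper, which simply states that the result ``readily follows from Proposition~\ref{prop.nu} and convex duality.'' Your only real difference is that you invoke Lemma~\ref{lemma.min.norm} directly rather than Proposition~\ref{prop.nu}, precisely to avoid the hypothesis $L\cap\interior(K)\ne\emptyset$ carried by the latter; this is a sensible bit of care, since the identity in Proposition~\ref{prop.nu} that is actually needed comes straight from Lemma~\ref{lemma.min.norm} and does not require that hypothesis.
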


It is illustrative to further specialize Proposition~\ref{prop.eigenvalue} to two important cases.  The first case is $V=\R^n$ with the usual dot inner product, $K = \R^n_+$ and $e = \matr{1 & \cdots & 1}\transp \in \R^n_+$.  In this case $ \|\cdot\|_e= \|\cdot\|_\infty, \, \|\cdot\|_e^* = \|\cdot\|_1$, $(\R^n_+)^*=\R^n_+$ and $\Delta(\R^n_+)$ is the standard simplex 
$\Delta_{n-1}:=\{x\in \R^n_+:\sum_{i=1}^n x_i = 1\}$.   In this case $\lambda_{e}(x) = \displaystyle\min_{i=1,\dots,n} x_i$. Hence for $\vertiii{\cdot} = \|\cdot\|_e$ we have
\begin{equation}\label{eqn.nu.lp}
\nu(L)= \max_{x\in L\atop \|x\|\le1} \min_{j=1,\dots,n} x_j.
\end{equation}

The second special case is $V=\S^n$ with the trace inner product, $K = \S^n_+$ and $e = I \in \S^n_+$.  In this case $\|\cdot\|_e $ and $\|\cdot\|_e^*$ are respectively the operator norm and the nuclear norm in $\S^n$.  More precisely
\[
\|X\|_e = \max_{i=1,\dots,n} |\lambda_i(X)|, \; \|X\|_e^* = \sum_{i=1}^n |\lambda_i(X)|,
\]
where  $\lambda_i(X),\; i=1,\dots,n$ are the usual eigenvalues of $X$.  Furthermore, $(\S^n_+)^*=\S^n_+$ and $\Delta(\S^n_+)$ is the {\em spectraplex} $\{X\in \S^n_+: \sum_{i=1}^n \lambda_i(X) = 1\}$.   In this case $\lambda_{e}(x) = \min_{j=1,\dots,n} \lambda_j(X).$ Thus, in nice analogy to \eqref{eqn.nu.lp}, for $\vertiii{\cdot} = \|\cdot\|_e$ we have
\begin{equation}\label{eqn.nu.sdp}
\nu(L)=\max_{X\in L\atop \|X\| \le 1}  \min_{j=1,\dots,n}\lambda_{j}(X).
\end{equation}
Section~\ref{sec.symm.cones} below details how \eqref{eqn.nu.lp} and \eqref{eqn.nu.sdp} extend to the more general case when $K$ is a symmetric cone.
\subsection{Sigma measure} 

The induced eigenvalue function discussed in Section~\ref{sec.induced.norm} above can be defined more broadly.   Given $v\in K\setminus\{0\}$ define $\lambda_v:V\rightarrow [-\infty,\infty)$ as follows 
\[
\lambda_v(x):=\max\{t: x- tv \in K\}.
\]
Define $\sigma: \Gr(V,m)\rightarrow [0,\infty)$ as follows
\begin{equation}\label{def.sigma.feas}
\sigma(L):=\min_{v\in K\atop \vertiii{v}=1}  \max_{x\in L \atop \|x\| \le 1}\lambda_v(x).
\end{equation}
As we show in Proposition~\ref{prop.ye.cond} below, $\sigma(L)$  can be seen as a generalization of the sigma measure introduced by Ye~\cite{Ye94}.  Observe that $L \cap \interior(K) \ne \emptyset$ if and only if $\sigma(L) > 0$ and in this case $$\sigma(L)=\displaystyle\min_{v\in K\atop \vertiii{v}=1}  \max_{x\in L\cap K \atop \|x\| = 1}\lambda_v(x).$$

As Corollary~\ref{corol.nu.sigma} below shows, when $L\cap \interior(K) \ne \emptyset$, the quantities $\sigma(L)$ and $\nu(L)$ are  closely related. 
To that end, we rely on the following analogous result to Proposition~\ref{prop.nu}. 

\begin{proposition}\label{prop.dual.sym}  
If $L\in \Gr(V,m)$ and $L\cap \interior(K)\ne \emptyset$ then
\begin{equation}\label{eq.sigma.norm}
\sigma(L) = 
\min_{v\in K \atop \vertiii{v}=1} \max_{x \in L \atop \|x\|=1} \lambda_v(x) = 
\min_{v\in K, y \in L^\perp, u\in K^* \atop \vertiii{v}=1,\ip{u}{v}=1} \|u-y\|^*.
\end{equation}
\end{proposition}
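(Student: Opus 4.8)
The plan is to prove the identity
\[
\sigma(L) = \min_{v\in K,\ \vertiii{v}=1} \max_{x\in L,\ \|x\|=1} \lambda_v(x) = \min_{v\in K,\, y\in L^\perp,\, u\in K^* \atop \vertiii{v}=1,\, \ip{u}{v}=1} \|u-y\|^*
\]
by rewriting $\lambda_v$ through convex duality and then invoking Lemma~\ref{lemma.min.norm}, in close parallel to the proof of Proposition~\ref{prop.nu}. The first equality is cosmetic: since $\lambda_v$ is positively homogeneous of degree one, $\max_{\|x\|\le 1}\lambda_v(x)$ equals $\max_{\|x\|=1}\lambda_v(x)$ whenever the inner maximum is attained at a nonzero point; under the hypothesis $L\cap\interior(K)\ne\emptyset$ there is some $v$ with $\max_{x\in L,\|x\|\le 1}\lambda_v(x)>0$, so the $\|x\|=1$ normalization is harmless once we argue it cannot help to shrink $x$.

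For the second equality, the key observation is the Minkowski-gauge description $\lambda_v(x) = \min\{\ip{u}{x} : u\in K^*,\ \ip{u}{v}=1\}$, valid for $v\in K\setminus\{0\}$ and any $x\in V$; this is the natural generalization of the identity $\lambda_e(x)=\min_{u\in\Delta(K^*)}\ip{u}{x}$ recorded just before Proposition~\ref{prop.eigenvalue}, now with $e$ replaced by $v$ and $\Delta(K^*)$ replaced by the slice $\{u\in K^*:\ip{u}{v}=1\}$. Substituting this into the definition of $\sigma(L)$ gives
\[
\sigma(L) = \min_{v\in K,\ \vertiii{v}=1}\ \max_{x\in L,\ \|x\|=1}\ \min_{u\in K^*,\ \ip{u}{v}=1}\ \ip{u}{x}.
\]
Next I would fix $v$ and apply a minimax exchange to the inner $\max_{x}\min_{u}$: the set $\{u\in K^*:\ip{u}{v}=1\}$ is closed and convex, $\{x\in L:\|x\|=1\}$ is compact (though not convex, one first maximizes over the convex hull, i.e.\ the unit ball, using homogeneity as above and the fact that the optimal value is positive), and $\ip{u}{x}$ is bilinear, so Sion's minimax theorem yields
\[
\max_{x\in L,\ \|x\|=1}\ \min_{u\in K^*,\ \ip{u}{v}=1}\ \ip{u}{x} = \min_{u\in K^*,\ \ip{u}{v}=1}\ \max_{x\in L,\ \|x\|\le 1}\ \ip{u}{x}.
\]
Finally, by Lemma~\ref{lemma.min.norm} applied to the subspace $L^\perp$ and the point $u$ (using that $\max_{x\in L,\|x\|\le 1}\ip{u}{x} = \max_{\|x\|=1,x\in L}\ip{u}{x} = \min_{y\in L^\perp}\|u-y\|^*$ once one checks the value is nonnegative, or more directly $\displaystyle\max_{\|x\|\le 1, x\in L}\ip{u}{x} = \min_{y\in L^\perp}\|u-y\|^*$ via the dual-norm computation in that lemma), the right-hand side becomes $\min_{y\in L^\perp}\|u-y\|^*$, and combining the two outer minima over $v$ and $u$ gives exactly the claimed triple minimum.

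The main obstacle is justifying the minimax exchange rigorously, since the feasible set $\{u\in K^*:\ip{u}{v}=1\}$ is generally unbounded, so Sion's theorem does not apply off the shelf. The cleanest fix is to restrict to $v$ in a neighborhood of a point $\hat v$ with $\hat v\in L\cap\interior(K)$ (guaranteed by the hypothesis), for which $\lambda_{\hat v}$-type slices of $K^*$ are bounded — concretely, $\ip{u}{\hat v}\ge c\vertiii{u}^*$ for some $c>0$ when $\hat v\in\interior(K)$, so near such $v$ the slice is compact and Sion applies; the general case then follows because the outer minimum over $v$ is attained (by compactness of $\{v\in K:\vertiii{v}=1\}$ together with lower semicontinuity of $v\mapsto\max_{x\in L,\|x\|\le 1}\lambda_v(x)$, which itself needs a short argument) at a $v$ for which the inner value is positive, hence the effective $u$'s lie in a bounded set. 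I would also need to confirm the edge bookkeeping: that the optimal $v$ indeed has positive inner value precisely because $L\cap\interior(K)\ne\emptyset$, which is exactly the observation recorded before the proposition that $\sigma(L)>0$ under this hypothesis. Modulo these continuity and boundedness technicalities, the argument is a direct transcription of the Proposition~\ref{prop.nu} proof with $e$ generalized to a variable $v\in K$.
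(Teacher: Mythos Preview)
Your approach is essentially the paper's: fix $v$, express $\lambda_v(x)$ via convex duality, swap $\max_x$ and $\min_u$, then apply Lemma~\ref{lemma.min.norm}; the paper simply packages the duality step as Lagrangian duality for the conic program $\max\{t: x-tv\in K,\ x\in L,\ \|x\|\le 1\}$, where Slater's condition (from $L\cap\interior(K)\ne\emptyset$) delivers the swap and dual attainment in one stroke. Your worry about Sion is misplaced---the set $\{x\in L:\|x\|\le 1\}$ is compact, so Sion applies directly even with the unbounded slice $\{u\in K^*:\ip{u}{v}=1\}$---and your neighborhood-of-$\hat v$ patch is both unnecessary and confused (the outer variable $v$ ranges over all of $\{v\in K:\vertiii{v}=1\}$, not near any fixed point of $L$).
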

\begin{proof} Assume $v\in K$ is fixed. By the construction of $\lambda_v$, convex duality, and Lemma~\ref{lemma.min.norm} we have
\begin{align*}
\max_{x \in L \atop \|x\|= 1} \lambda_v(x)
&= \max_{x \in L, t\in \R \atop \|x\|\le 1, x-tv\in K} t \\&
= \max_{x \in L, t\in \R \atop \|x\|\le 1} \min_{u\in K^*} (t + \ip{u}{x-tv}) \\
&=  \min_{u\in K^*}\max_{x \in L, t\in \R \atop \|x\|\le 1} (t + \ip{u}{x-tv}) \\
& = \min_{u\in K^* \atop \ip{u}{v}=1} \max_{x\in L\atop \|x\|= 1} \ip{u}{x} \\
& = \min_{u\in K^*,y\in L^\perp \atop \ip{u}{v}=1} \|u-y\|^*.
\end{align*}
 We thus get \eqref{eq.sigma.norm} by taking minimum over the set $\{v\in K: \vertiii{v}=1\}$.
\end{proof}
\begin{corollary} 
\label{corol.nu.sigma} Assume $L\in \Gr(V,m)$ is such that $L\cap \interior(K)\ne \emptyset$.
\begin{description}
\item[(a)] For any two norms $\|\cdot\|, \vertiii{\cdot}$ in $V$ the following holds
\[
1\le \min_{v\in K, u\in K^* \atop \vertiii{v}=1,\ip{u}{v}=1} \vertiii{u}^* \le \frac{\sigma(L)}{\nu(L)} \le 
 \frac{1}{
 \displaystyle \min_{u\in K^*\atop \vertiii{u}^*=1} \max_{v\in K \atop \vertiii{v}=1} \ip{u}{v}}. 
\]
\item[(b)] If $\|\cdot\| = \vertiii{\cdot} = \|\cdot\|_2$ then
\[
1 \le \frac{\sigma(L)}{\nu(L)} \le \frac{1}{ \cos(\Theta(K^*,K))}.
\]
where 
\[
\Theta(K^*,K):=\max_{u\in K^*\setminus\{0\}}\min_{v\in K\setminus\{0\}} \angle(u,v).
\]
In particular, if in addition $K=K^*$  then
$\nu(L) = \sigma(L).$
\item[(c)] If $\vertiii{\cdot} = \|\cdot\|_e$ then
\[
\sigma(L) = \nu(L).
\] 

\end{description}
\end{corollary}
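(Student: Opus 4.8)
The plan is to prove part (a) first and then obtain parts (b) and (c) by evaluating the outer bounds of (a) under the respective norm choices. Throughout, recall from the hypothesis that $L\cap\interior(K)\ne\emptyset$, so $\nu(L)>0$ and the ratios make sense, and that two dual reformulations are available: Proposition~\ref{prop.nu} gives $\nu(L)=\min\{\|u-y\|^*: u\in K^*,\, y\in L^\perp,\, \vertiii{u}^*=1\}$, while Proposition~\ref{prop.dual.sym} gives $\sigma(L)=\min\{\|u-y\|^*: v\in K,\, y\in L^\perp,\, u\in K^*,\, \vertiii{v}=1,\, \ip{u}{v}=1\}$, and its proof shows that for fixed $v\in K$ with $\vertiii{v}=1$ one has $\max_{x\in L,\|x\|=1}\lambda_v(x)=\min\{\|u-y\|^*: u\in K^*,\, y\in L^\perp,\, \ip{u}{v}=1\}$. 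The leftmost inequality $1\le\min_{v,u}\vertiii{u}^*$ is immediate from H\"older's inequality \eqref{eq.holder}: any feasible pair has $1=\ip{u}{v}\le\vertiii{u}^*\vertiii{v}=\vertiii{u}^*$.

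For the middle inequality of (a) I would take an optimal triple $(\bar v,\bar y,\bar u)$ for $\sigma(L)$ as in Proposition~\ref{prop.dual.sym}, so $\sigma(L)=\|\bar u-\bar y\|^*$, $\vertiii{\bar v}=1$, $\ip{\bar u}{\bar v}=1$ (hence $\vertiii{\bar u}^*\ge 1>0$). Then $(\bar u/\vertiii{\bar u}^*,\,\bar y/\vertiii{\bar u}^*)$ is feasible for the $\nu(L)$ minimization, so $\nu(L)\le\|\bar u-\bar y\|^*/\vertiii{\bar u}^*=\sigma(L)/\vertiii{\bar u}^*$, i.e.\ $\vertiii{\bar u}^*\le\sigma(L)/\nu(L)$; since $(\bar v,\bar u)$ is feasible for the leftmost minimization, $\min_{v,u}\vertiii{u}^*\le\vertiii{\bar u}^*\le\sigma(L)/\nu(L)$. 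For the rightmost inequality, set $\tau:=\min_{u\in K^*,\vertiii{u}^*=1}\max_{v\in K,\vertiii{v}=1}\ip{u}{v}$; regularity of $K$ (hence of $K^*$) gives $\tau>0$, since for $u\in K^*\setminus\{0\}$ one cannot have $\ip{u}{v}\le 0$ for all $v\in K$. Taking an optimal $\bar u\in K^*$ for $\nu(L)$ with $\vertiii{\bar u}^*=1$, a minimizing $\bar y\in L^\perp$ with $\nu(L)=\|\bar u-\bar y\|^*$, and $\bar v\in K$ with $\vertiii{\bar v}=1$ and $\ip{\bar u}{\bar v}\ge\tau$, the pair $(\bar u/\ip{\bar u}{\bar v},\,\bar y/\ip{\bar u}{\bar v})$ is feasible for the inner problem that equals $\max_{x\in L,\|x\|=1}\lambda_{\bar v}(x)$, whence $\sigma(L)\le\max_{x\in L,\|x\|=1}\lambda_{\bar v}(x)\le\|\bar u-\bar y\|^*/\ip{\bar u}{\bar v}\le\nu(L)/\tau$, i.e.\ $\sigma(L)/\nu(L)\le 1/\tau$.

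Parts (b) and (c) then come down to identifying the outer bounds of (a). For (b), with $\|\cdot\|=\vertiii{\cdot}=\|\cdot\|_2$, the leftmost bound is $\ge 1$ (already from (a), or directly by Cauchy--Schwarz), and for $u\in K^*$ with $\|u\|_2=1$ we have $\max_{v\in K,\|v\|_2=1}\ip{u}{v}=\cos\bigl(\min_{v\in K\setminus\{0\}}\angle(u,v)\bigr)$ because $\ip{u}{v}\ge 0$ on $K$, so $\tau=\cos\Theta(K^*,K)$ and (a) yields $1\le\sigma(L)/\nu(L)\le 1/\cos\Theta(K^*,K)$; if in addition $K=K^*$, then choosing $v=u$ shows $\Theta(K^*,K)=0$, forcing $\sigma(L)=\nu(L)$. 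For (c), with $\vertiii{\cdot}=\|\cdot\|_e$, I would use $\|e\|_e=1$, $\|u\|_e^*=\ip{u}{e}$ for $u\in K^*$, and the fact that $\|v\|_e=1$ implies $e-v\in K$: taking $v=e$ shows the leftmost bound equals $1$, and also that $\max_{v\in K,\|v\|_e=1}\ip{u}{v}=\ip{u}{e}=\|u\|_e^*=1$ for every $u\in K^*$ with $\|u\|_e^*=1$, so the rightmost bound equals $1$ as well, whence $\sigma(L)=\nu(L)$. I expect the only delicate point to be the bookkeeping in the rescaling arguments of part (a): verifying positivity of the relevant inner products so that the divisions are legitimate, and invoking regularity of $K$ to guarantee $\tau>0$; everything else is routine once the dual formulations from Propositions~\ref{prop.nu} and~\ref{prop.dual.sym} are in hand.
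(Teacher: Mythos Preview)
Your proposal is correct and follows essentially the same approach as the paper's proof: the first inequality via H\"older, the middle inequality by rescaling an optimal $\sigma(L)$-triple from Proposition~\ref{prop.dual.sym} to produce a feasible point for $\nu(L)$, the upper bound by rescaling an optimal $\nu(L)$-pair to produce a feasible point for the $\sigma$-problem, and parts (b) and (c) by specializing the outer constants. The only cosmetic differences are that the paper, for the third inequality, takes the infimum over all admissible $v$ rather than fixing one $\bar v$ with $\ip{\bar u}{\bar v}\ge\tau$, and in (c) it only shows $\tau\ge 1$ (via $v=e$) rather than computing both outer bounds exactly as you do; neither changes the substance.
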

\begin{proof}
\begin{description}
\item[(a)] The first inequality is an immediate consequence of H\"older's inequality~\eqref{eq.holder}.  Next, from Proposition~\ref{prop.dual.sym} it follows that $\sigma(L) = \|\bar u - \bar y\|^*$ for some $\bar v \in K, \bar y\in L^\perp, \bar u\in K^*$ with $\vertiii{\bar v} = 1, \ip{\bar u}{\bar v} = 1$.  Thus from the construction  of $\nu(L)$ we get
$$ 
\nu(L) \le \frac{\|\bar u - \bar y\|^*}{\vertiii{\bar u}^*} \le \frac{\sigma(L)}{\displaystyle\min_{v\in K, u\in K^*\atop \vertiii{v}=1,\ip{u}{v}=1}\vertiii{u}^*}
$$ 
and hence the second inequality follows.

For the third inequality assume $\nu(L) = \|\hat u - \hat y\|^*$ for some $\hat u \in K^*, \hat y \in L^\perp$ with $\vertiii{\hat u}^* = 1.$  Then by Proposition~\ref{prop.dual.sym} we get
\begin{align*}
\sigma(L)& = \min_{v\in K, y \in L^\perp, u\in K^* \atop \vertiii{v}=1,\ip{u}{v}=1} \|u-y\|^* \leq \inf_{v\in K, y \in L^\perp, \atop \vertiii{v}=1, \ip{\hat u}{v}\neq 0 } \|\frac{\hat u}{\ip{\hat u}{v}}-y\|^*\\
 & = \inf_{v\in K, y \in L^\perp, \atop \vertiii{v}=1, \ip{u}{v}=1} \frac{\|\hat u-y\|^*}{\ip{\hat u}{v}}
 = \frac{\displaystyle\min_{y \in L^\perp} \|\hat u-y\|^*}{ \displaystyle\max_{v\in K\atop  \vertiii{v}=1}  \ip{\hat u}{v}}\\
 & \leq  \frac{\|\hat u-\hat y\|^*}{\displaystyle \max_{v\in K\atop  \vertiii{v}=1}  \ip{\hat u}{v}}, 
\end{align*}
hence
\[
\sigma(L) \le \frac{\|\hat u - \hat y\|^*}{\displaystyle\max_{v\in K \atop \vertiii{v}=1} \ip{\hat u}{v}} \le \frac{\nu(L)}{\displaystyle\min_{u\in K^*\atop \vertiii{u}^*=1}\max_{v\in K \atop \vertiii{v}=1} \ip{u}{v}}
\]
and the third inequality follows.
\item[(b)] The first inequality follows from part (a).  For the second inequality observe that since $\cos(\cdot)$ is decreasing in $[0,\pi]$
\begin{align*}
\cos(\Theta(K^*,K)) &= \min_{u\in K^*\setminus\{0\}}\max_{v\in K\setminus\{0\}} \cos(\angle(u,v)) \\
&= \min_{u\in K^*\setminus\{0\}}\max_{v\in K\setminus\{0\}} \frac{\ip{u}{v}}{\|u\|_2\cdot \|v\|_2} \\
&= \min_{u\in K^*\atop \|u\|_2 = 1}\max_{v\in K\atop \|v\|_2=1} \ip{u}{v}.
\end{align*}
The second inequality then follows from part (a) as well.

If in addition $K=K^*$ then $\Theta(K^*,K) = 0$ and consequently $\frac{\sigma(L)}{\nu(L)}=1$.

\item[(c)]  Since $\vertiii{\cdot} = \|\cdot\|_e$, we have $\vertiii{e} =1$ and $\vertiii{u}^* = \ip{u}{e}$  for all $u\in K^*$.  Thus
$\displaystyle\min_{u\in K^*\atop \vertiii{u}^*=1}\max_{v\in K \atop \vertiii{v}=1} \ip{u}{v} \ge \displaystyle\min_{u\in K^*\atop \vertiii{u}^*=1} \ip{u}{e} = 1.$
Therefore from part (a) it follows that $\frac{\sigma(L)}{\nu(L)} = 1.$ 
\end{description}

\end{proof}

The following example shows that the upper bound in Corollary~\ref{corol.nu.sigma}(b) is tight.  

\begin{example} Assume $\phi \in (0,\pi/4).$  Let $V = \R^2$ be endowed with the dot inner product and $K:=\{(x_1,x_2)\in V: \cos(\phi)x_2 \ge |x_1|\}.$ Then $K^*=\{(x_1,x_2)\in V: x_2 \ge \cos(\phi)|x_1|\}.$ In this case $\Theta(K,K^*) = \pi/2-2\phi.$  For $L = \{x\in V: x_1 = 0\}$ and $\|\cdot\| = \vertiii{\cdot} = \|\cdot\|_2$ it is easy to see that $ \nu(L) = \sin(\phi)$ and  $\sigma(L) = 1/(2\cos(\phi))$.  Hence
\[
\frac{\sigma(L)}{\nu(L)} = \frac{1}{2\sin(\phi)\cos(\phi)} = \frac{1}{\sin(2\phi)} = \frac{1}{\cos(\pi/2-2\phi)} = \frac{1}{\cos(\Theta(K,K^*))}.
\]  
\end{example}

\section{Symmetry measure}
\label{sec.symmetry.meas}
Next, we will consider the symmetry measure of $L$, which has been used as a measure of conditioning~\cite{BellF08,BellF09b}.  This measure is defined as follows.   Given a set $S$ in a vector space such that $0\in S$, define
\begin{equation}\label{eq.sym.point}
\sym(0,S):= \max\{t\ge 0: w\in S \Rightarrow -tw \in S\}.
\end{equation}
Observe that $\sym(0,S) \in [0,1]$ with $\sym(0,S)=1$ precisely when $S$ is perfectly symmetric around $0$.

Assume $A:V\rightarrow W$ is a linear mapping such that $L=\ker(A)$.  Define the {\em symmetry} measure of $L$ relative to $K$ as follows.   
\begin{equation}\label{eq.sym.set}
\sym(L):= \sym(0,A(\{ x\in K: \|x\|\le 1\})).
\end{equation}
It is easy to see that $\sym(L)$ depends only on $L,K$ and not on the choice of $A$. More precisely, $\sym(0,A(\{ x\in K: \|x\|\le 1\})) = 
\sym(0,A'(\{ x\in K: \|x\|\le 1\}))$ if $\ker(A) = \ker(A') = L$.
Indeed, the quantity $\sym(L)$ can be alternatively defined  in terms of $L$ and $K$ alone with no reference to any linear mapping $A$ as the next proposition states.
\begin{proposition}\label{prop.symmetry} Assume $L\in \Gr(V,m)$.  Then
\[
\sym(L):=\min_{v\in K\atop \|v\|=1} \max_{x\in K, t \in \R} 
\{t: x+tv\in L, \; \|x\|\le 1\}.
\]
\end{proposition}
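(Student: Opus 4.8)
The plan is to fix any linear map $A\colon V\to W$ with $\ker A=L$ (the value $\sym(L)$ is independent of the choice), set $B:=\{x\in K:\|x\|\le 1\}$ and $S:=A(B)$, and translate $\sym(L)=\sym(0,S)$ into a statement about $L$ alone. Here $S$ is compact and convex with $0\in S$, and $S\neq\{0\}$ since $\lspan B=\lspan K=V\supsetneq L$. By definition $\sym(L)=\max\{t\ge 0:-tS\subseteq S\}$, and this maximum is attained: the feasible set for $t$ is closed (as $S$ is compact) and bounded (as $S\neq\{0\}$). Write $t^*$ for its value. The key observation is that, for $t\ge 0$,
\[
-tS\subseteq S\iff \forall v\in B\ \exists x\in B:\ x+tv\in L,
\]
because $-tA(v)\in A(B)$ says exactly that $A(x+tv)=0$, i.e.\ $x+tv\in\ker A=L$, for some $x\in B$.

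Next I would homogenize the quantifier. Since $K$ is a cone, $B=\co\bigl(\{0\}\cup\{v\in K:\|v\|=1\}\bigr)$, and the condition above is equivalent to its restriction to unit $v\in K$: the case $v=0$ is trivial, while if $x'\in B$ works for $v/\|v\|$ then $\|v\|x'\in B$ works for $v$. For a fixed unit $v\in K$, the set $\{t\in\R:\exists x\in B,\ x+tv\in L\}=\{t:tv\in L-B\}$ is a closed interval containing $0$ (the preimage of the closed convex set $L-B$ under $t\mapsto tv$), bounded above precisely when $v\notin L$ (pair $v$ with a functional in $L^\perp$ not vanishing on it); let $g(v)\in[0,\infty]$ denote its supremum, which is a genuine maximum when finite and equals the inner quantity $\max\{t\in\R:x+tv\in L,\ \|x\|\le 1\}$ of the proposition. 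Combining the above,
\[
\sym(L)=\max\{t\ge 0:\ t\le g(v)\text{ for every unit }v\in K\}=\inf_{v\in K,\ \|v\|=1}g(v),
\]
a finite number since $K\not\subseteq L$ provides a unit $v\notin L$.

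It remains to see that this infimum is attained, which I expect to be the delicate step, since $g$ is only upper semicontinuous on the unit sphere of $K$ and jumps to $+\infty$ on $L$, so the existence of a minimizer is not a soft compactness statement. If $t^*=0$ then $0$ is on the relative boundary of $S$, and pulling a supporting hyperplane back through $A$ produces $u\in L^\perp\cap K^*\setminus\{0\}$; for any unit $v\in K$ with $\ip{u}{v}>0$ the relation $x+tv\in L$ with $x\in K,\ t\ge 0$ forces $\ip{u}{x}+t\ip{u}{v}=0$ with both summands nonnegative, hence $t=0$, so $g(v)=0=t^*$. If $t^*>0$ then $0$ is in the relative interior of $S$, and maximality of $t^*$ forces $-t^*S$ to touch the relative boundary of $S$ (a compact set strictly inside a relatively open set could be dilated slightly while staying in $S$, contradicting maximality); pick $w^*=A(v^*)\in S$ with $v^*\in B$ and $-t^*w^*$ on the relative boundary of $S$. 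A rescaling argument gives $\|v^*\|=1$: otherwise $-t^*A(v^*/\|v^*\|)$ would lie strictly outside $S$, contradicting $g\ge t^*$ on unit vectors (which holds because $-t^*S\subseteq S$). Finally, testing $-t^*w^*$ and $-(t^*+\varepsilon)w^*$ against $S$ through the key identity yields $t^*\le g(v^*)\le t^*+\varepsilon$ for all $\varepsilon>0$, so $g(v^*)=t^*$. Altogether $\sym(L)=t^*=\min_{v\in K,\ \|v\|=1}\max_{x\in K,\ t\in\R}\{t:x+tv\in L,\ \|x\|\le 1\}$, as claimed.
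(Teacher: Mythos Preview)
Your approach is essentially the same as the paper's: fix $A$ with $\ker A=L$, set $S=A(B)$, and unwind $\sym(0,S)$ through the equivalence $-tA(v)\in S \Leftrightarrow \exists x\in B,\ x+tv\in L$. The paper carries this out in three displayed lines, simply writing
\[
\sym(L)=\min_{v\in K,\ \|v\|\le 1}\max_{t\in\R}\{t:-tAv\in S\}
=\min_{v\in K,\ \|v\|\le 1}\max_{x\in K,\ t\in\R}\{t:x+tv\in L,\ \|x\|\le 1\},
\]
and stops there. You go further in two respects: you explicitly reduce the outer constraint from $\|v\|\le 1$ to $\|v\|=1$ (the paper's displayed proof actually ends with $\|v\|\le 1$, leaving the last homogenization implicit), and you give a genuine argument that the outer infimum is attained, splitting on whether $0$ lies in the relative interior of $S$. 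Both of these additions are correct; in particular, your line-segment observation that a relative-boundary point $-t^*w^*$ with $0$ relatively interior forces $t^*=\max\{s:-sw^*\in S\}$ is exactly what makes the rescaling to $\|v^*\|=1$ go through. So your proof is a more careful version of the same argument, not a different one.
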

\begin{proof}
Assume $A:V\rightarrow W$ is such that $L=\ker(A)$.  From \eqref{eq.sym.point} and~\eqref{eq.sym.set} it follows that for $S:= \{Ax: x\in K, \|x\|\le 1\}$
\begin{align*}
\sym(L) &= \min_{v\in K\atop \|v\|\le 1} \max_{t\in \R}\{t: -tAv \in S\} \\
&= \min_{v\in K\atop \|v\|\le 1} \max_{x\in K,\,t\in \R}\{t: -tAv = Ax, \; \|x\| \le 1\} \\
&= 
\min_{v\in K\atop \|v\|\le 1} \max_{x\in K,\,t\in \R}\{t: x+tv \in L, \; \|x\| \le 1\}.
\end{align*}
\end{proof}
Observe that $L \cap \interior(K) \ne \emptyset$ if and only if $\sym(L) > 0$. It is also easy to see that $\sym(L) \in [0,1]$ for all $L\in \Gr(V,m)$.  The following result is a general version of \cite[Proposition 22]{EpelF02}.

\begin{theorem} 
\label{thm.symmetry}
Assume $L\in \Gr(V,m)$ is such that $L \cap \interior K \neq \emptyset$, $\|\cdot \| = \vertiii{\cdot}$ and $\sym(L)<1$.  Then
\[
\frac{\sym(L)}{1+\sym(L)}  \le \sigma(L) \le \frac{\sym(L)}{1-\sym(L)}.
\]
If $\|\cdot\| = \|\cdot\|_e^*$   for some $e \in \interior(K^*)$ then
\[
\frac{\sym(L)}{1+\sym(L)} = \sigma(L).
\]
\end{theorem}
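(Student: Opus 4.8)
The plan is to rewrite both $\sigma(L)$ and $\sym(L)$ as minima, over unit vectors $v\in K$, of two optimization problems that share the \emph{same} feasible set apart from which vector gets normalized, and then to move between the two problems by a simple rescaling of feasible points.

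Fix $v\in K$ with $\|v\|=\vertiii{v}=1$ (recall $\|\cdot\|=\vertiii{\cdot}$). Unwinding the definition of $\lambda_v$ shows that the inner maximum in the definition of $\sigma(L)$ equals
\[
f(v):=\max\{\,t:\ \ell\in L,\ \ell-tv\in K,\ \|\ell\|\le 1\,\},
\]
while Proposition~\ref{prop.symmetry}, after the substitution $\ell:=x+tv$, shows that the inner maximum in the formula for $\sym(L)$ equals
\[
g(v):=\max\{\,t:\ \ell\in L,\ \ell-tv\in K,\ \|\ell-tv\|\le 1\,\}.
\]
Thus $\sigma(L)=\min_{v\in K,\,\|v\|=1}f(v)$ and $\sym(L)=\min_{v\in K,\,\|v\|=1}g(v)$, and since $L\cap\interior(K)\ne\emptyset$ we also have $f(v),g(v)\ge 0$. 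The two problems differ only in whether $\ell\in L$ or $\ell-tv\in K$ is normalized, and these vectors differ by $tv$ with $\|v\|=1$; hence $\bigl|\,\|\ell\|-\|\ell-tv\|\,\bigr|\le t$ for any feasible $(\ell,t)$ with $t\ge0$. Consequently, if $(\ell,t)$ is optimal for $g(v)$ then $\bigl(\tfrac{1}{1+t}\ell,\tfrac{t}{1+t}\bigr)$ is feasible for $f(v)$, and if $(\ell,t)$ is optimal for $f(v)$ then $\bigl(\tfrac{1}{1+t}\ell,\tfrac{t}{1+t}\bigr)$ is feasible for $g(v)$; these observations yield $\frac{g(v)}{1+g(v)}\le f(v)\le\frac{g(v)}{1-g(v)}$, the upper bound when $g(v)<1$. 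The two stated inequalities then follow by minimizing over $v$: from the lower bound and monotonicity of $s\mapsto s/(1+s)$ we get $f(v)\ge\frac{g(v)}{1+g(v)}\ge\frac{\sym(L)}{1+\sym(L)}$ for every $v$, hence $\sigma(L)\ge\frac{\sym(L)}{1+\sym(L)}$; and since $\sym(L)<1$ there are unit $v\in K$ with $g(v)$ arbitrarily close to $\sym(L)$ and in particular $g(v)<1$, for which the upper bound gives $\sigma(L)\le f(v)\le\frac{g(v)}{1-g(v)}$, and letting $g(v)\downarrow\sym(L)$ yields $\sigma(L)\le\frac{\sym(L)}{1-\sym(L)}$. (Vectors $v\in L\cap K$, where $g(v)=+\infty$, are irrelevant because $\sym(L)<1$.)

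For the equality, suppose $\|\cdot\|=\|\cdot\|_e^*$ with $e\in\interior(K^*)$. Interchanging the roles of $K$ and $K^*$ in the identity $\|u\|_e^*=\ip{u}{e}$ for $u\in K^*$ (Section~\ref{sec.induced.norm}) and using $K^{**}=K$, we obtain $\|w\|=\ip{w}{e}$ for all $w\in K$. If $(\ell,t)$ is optimal for $f(v)$ with $t\ge0$, then $\ell=(\ell-tv)+tv\in K$, so
\[
\|\ell\|=\ip{\ell}{e}=\ip{\ell-tv}{e}+t\ip{v}{e}=\|\ell-tv\|+t,
\]
using $\ell-tv\in K$, $v\in K$, and $\ip{v}{e}=\|v\|=1$. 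Hence $\|\ell-tv\|\le 1-t$, and when $t<1$ the rescaled pair $\bigl(\tfrac{1}{1-t}\ell,\tfrac{t}{1-t}\bigr)$ is feasible for $g(v)$. This sharpens the second rescaling to $g(v)\ge\frac{f(v)}{1-f(v)}$, i.e., $f(v)\le\frac{g(v)}{1+g(v)}$, which together with the reverse bound above gives $f(v)=\frac{g(v)}{1+g(v)}$ for every unit $v\in K$ (the case $f(v)=1$ forces $v\in L\cap K$ and is covered by the usual convention); minimizing over $v$ yields $\sigma(L)=\frac{\sym(L)}{1+\sym(L)}$.

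The main obstacle I anticipate is not a hard estimate but setting up the reformulation precisely — verifying that the inner problem behind $\sym(L)$ coming from Proposition~\ref{prop.symmetry} genuinely has the same feasible set $\{(\ell,t):\ell\in L,\ \ell-tv\in K\}$ as the one behind $\sigma(L)$ — together with the attendant bookkeeping about attainment of the various extrema and the degenerate vectors $v\in L\cap K$. Once that is in place, the argument is just the elementary rescaling identity relating $f(v)$ and $g(v)$, which becomes an equality precisely when the ambient norm is the one induced by $K^*$.
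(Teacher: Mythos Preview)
Your proof is correct and follows essentially the same approach as the paper's: both arguments recognize, via Proposition~\ref{prop.symmetry}, that the inner maxima defining $\sigma(L)$ and $\sym(L)$ share the feasible set $\{(\ell,t):\ell\in L,\ \ell-tv\in K\}$ and differ only in whether $\ell$ or $\ell-tv$ is normalized, and then pass between them by rescaling (the paper rescales by $1/\|x-\sigma v\|$ where you uniformly use $1/(1+t)$, but the effect is the same). Your pointwise organization through $f(v)$ and $g(v)$ is a mild tidying of the paper's argument rather than a different route.
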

\begin{proof}
To ease notation, let $s:= \sym(L)$ and $\sigma := \sigma(L)$.  First we show that $\sigma\ge \frac{s}{1+s}$.  To that end, assume $v\in K, \|v\|=1$ is given.   It follows from the definition of $\sym(L)$ that there exists $z\in K, \|z\|\le 1$ such that $z+sv \in L$.  Thus $x:=\frac{1}{\|z+sv\|}(z+sv)\in L, \|x\|=1$ and
\[
\lambda_{v}(x) \ge \frac{s}{\|z+sv\|} \ge \frac{s}{1+s}.
\]  
Since this holds for any $v\in K, \|v\|=1$, it follows that
$\sigma\ge \frac{s}{1+s}.$

Next we show that $\sigma\le\frac{s}{1-s}$.  To that end, assume $v\in K, \|v\| = 1$ is such that 
\begin{equation}\label{eq.sym.less.one}
\max_{x\in K,\,t\in \R}\{t: x+tv \in L, \; \|x\| \le 1\} < 1.
\end{equation}
At least one such $v$ exists because $\sym(L) < 1$.

It follows from the construction of $\sigma(L)$ that there exists $x \in L, \|x\| = 1$ such that $\lambda_v(x) = \sigma > 0$.  In particular, $x - \sigma v \in K$.  Furthermore, $x-\sigma v \ne 0$ as otherwise $v =\frac{1}{\alpha} x\in L$ and $x+v \in L$ which would contradict \eqref{eq.sym.less.one}.  Thus $z:=\frac{x-\sigma v}{\|x-\sigma v\|} \in K, \|z\|=1$ and 
$z +\frac{\sigma}{\|x-\sigma v\|} v \in L$ with
$\frac{\sigma}{\|x-\sigma v\|} \ge \frac{\sigma}{1+\sigma}.$  Since this holds for any $v\in K, \|v\|=  1$ satisfying ~\eqref{eq.sym.less.one}, it follows that $s \ge \frac{\sigma}{1+\sigma}$ or equivalently $\sigma \le \frac{s}{1-s}$.

In the special case when $\|\cdot\| = \|\cdot\|_e^*$ for some $e \in \interior(K^*)$ we have $\|z\| = \ip{e}{z}$ for all $z\in K$.  In particular,
$\|x-\sigma v \| = \ip{e}{x-\sigma v} = \ip{e}{x}-\ip{e}{\sigma v} = \|x\| - \sigma \|v\| = 1-\sigma$ in the previous paragraph and so the second inequality can be sharpened to $s \ge \frac{\sigma}{1-\sigma}$ or equivalently $\sigma \le \frac{s}{1+s}$.
\end{proof}

We also have the following relationship between the Grassmann distance to ill-posedness and the symmetry measure.  

\begin{corollary} 
\label{corol.symmetry} Assume $L\in \Gr(V,m)$ is such that $L \cap \interior(K) \neq \emptyset$ and $\|\cdot \| = \vertiii{\cdot}$.  Then
\[
\min_{u\in K^*\atop \|u\|^*=1} \max_{v\in K \atop \|v\|=1} \ip{u}{v}\cdot \frac{ \sym(L)}{1+\sym(L)} \le \dist(L,\Sigma_m) \le \frac{\sym(L)}{1-\sym(L)}.
\]
In particular, if $\|\cdot \| = \vertiii{\cdot} = \|\cdot \|_2$ then 
\[
\cos(\Theta(K^*,K))\cdot \frac{ \sym(L)}{1+\sym(L)} \le \dist(L,\Sigma_m) \le \frac{\sym(L)}{1-\sym(L)}.
\]
\end{corollary}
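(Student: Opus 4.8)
The plan is to assemble the corollary from three results already in hand: Theorem~\ref{thm:Grassmann.nu}(a), Corollary~\ref{corol.nu.sigma}(a), and Theorem~\ref{thm.symmetry}. Since $L\cap\interior(K)\neq\emptyset$, Theorem~\ref{thm:Grassmann.nu}(a) gives $\dist(L,\Sigma_m)=\nu(L)$, so it suffices to bound $\nu(L)$. Because $\|\cdot\|=\vertiii{\cdot}$, the corresponding dual norms coincide too, so the constant $\min_{u\in K^*,\|u\|^*=1}\max_{v\in K,\|v\|=1}\ip{u}{v}$ in the statement is exactly the denominator $\min_{u\in K^*,\vertiii{u}^*=1}\max_{v\in K,\vertiii{v}=1}\ip{u}{v}$ on the right of Corollary~\ref{corol.nu.sigma}(a). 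I would abbreviate this common quantity by $c$ (it is positive because $K$, hence $K^*$, is regular) and write $s:=\sym(L)$, $\sigma:=\sigma(L)$, $\nu:=\nu(L)$.

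For the upper bound I would read Corollary~\ref{corol.nu.sigma}(a) as its first inequality $\nu\le\sigma$ (which is just H\"older), then apply the right-hand inequality of Theorem~\ref{thm.symmetry}, $\sigma\le s/(1-s)$, to conclude $\dist(L,\Sigma_m)=\nu\le\sigma\le s/(1-s)$. For the lower bound I would instead use Corollary~\ref{corol.nu.sigma}(a) in the form $\sigma/\nu\le 1/c$, i.e. $\nu\ge c\,\sigma$, and combine it with the left-hand inequality of Theorem~\ref{thm.symmetry}, $\sigma\ge s/(1+s)$, to get $\dist(L,\Sigma_m)=\nu\ge c\,\sigma\ge c\cdot s/(1+s)$; this is precisely the asserted two-sided bound. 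The one delicate point is that Theorem~\ref{thm.symmetry} is stated for $s<1$: if $s=1$ the upper bound reads $\le+\infty$ and is vacuous, while the lower bound still goes through because the first half of the proof of Theorem~\ref{thm.symmetry} (the inequality $\sigma\ge s/(1+s)$) never invokes $s<1$. So I would dispose of the case $s=1$ in one line and otherwise quote Theorem~\ref{thm.symmetry} verbatim.

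For the Euclidean specialization $\|\cdot\|=\vertiii{\cdot}=\|\cdot\|_2$ I would simply substitute the identity
\[
\min_{u\in K^*,\|u\|_2=1}\ \max_{v\in K,\|v\|_2=1}\ \ip{u}{v}=\cos(\Theta(K^*,K)),
\]
which is exactly the chain of equalities carried out in the proof of Corollary~\ref{corol.nu.sigma}(b): rewrite $\ip{u}{v}/(\|u\|_2\|v\|_2)=\cos\angle(u,v)$ and use that $\cos$ is decreasing on $[0,\pi]$ to match $\min$--$\max$ with the definition of $\Theta(K^*,K)$. Plugging $c=\cos(\Theta(K^*,K))$ into the general bound gives the second displayed inequality. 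I do not anticipate a genuine obstacle here, since the corollary is an assembly of earlier statements; the only things to watch are that the norm-dependent constant $c$ is literally the same in Corollary~\ref{corol.nu.sigma}(a) and in the corollary statement (automatic once $\|\cdot\|=\vertiii{\cdot}$) and the harmless degenerate case $s=1$.
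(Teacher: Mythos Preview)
Your proposal is correct and matches the paper's own proof, which simply states that the corollary is an immediate consequence of Theorem~\ref{thm:Grassmann.nu}, Theorem~\ref{thm.symmetry}, and Corollary~\ref{corol.nu.sigma}. Your additional remark disposing of the degenerate case $\sym(L)=1$ (not assumed away in the corollary, though it is in Theorem~\ref{thm.symmetry}) is a welcome bit of extra care that the paper leaves implicit.
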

\begin{proof} This is an immediate consequence of Theorem~\ref{thm:Grassmann.nu}, Theorem~\ref{thm.symmetry}, and Corollary~\ref{corol.nu.sigma}.
\end{proof}

\section{Symmetric and polyhedral cones}
\label{sec.symm.poly}
In this section we specialize our developments to two important cases. The first one is when $K$ is a symmetric cone and the space $V$ is endowed with an Euclidean Jordan algebra structure.  The second one is the case $V=\R^n,\, K = \R^n_+$.

\subsection{Symmetric cones}
\label{sec.symm.cones}

We next recall some basic material on Euclidean Jordan algebras.  The articles~\cite{SchmA01,SchmA03} and the textbooks~\cite{Baes09,FaraK94} provide a more detailed discussion on this topic and its relevance to optimization. 

Assume $V$ is endowed with a bilinear operation $\circ : V\times V \rightarrow V$ and $e\in V$ is a particular element of $V$.  The triple $(V,\circ,e)$ is an {\em Euclidean Jordan algebra with identity element} if  the following conditions hold:
\begin{itemize}
\item $x\circ y = y \circ x$ for all $x,y\in V$
\item $x\circ(x^2 \circ y) = x^2\circ(x\circ y)$ for all $x,y\in V$, where $x^2 = x\circ x$
\item $x\circ e = x$ for all $x \in V$
\item There exists an associative positive definite bilinear form on $V$.
\end{itemize}
An element $c\in V$ is {\em idempotent} if $c^2  = c$. An idempotent element of $V$ is a {\em primitive idempotent} if it is not the sum of two other idempotents. We will write $c\in \PI$ as shorthand for the statement  `$c$ is a primitive idempotent.' The rank $r$ of $V$ is the smallest integer such that for all $x\in V$ the set $\{e,x,x^2,\dots,x^r\}$ is linearly dependent.  Every element $x\in V$ has a {\em spectral decomposition}
\begin{equation}\label{spectral}
x = \sum_{i=1}^r \lambda_i(x) c_i,
\end{equation}
where $\lambda_i(x) \in \R,\; i=1,\dots,r$ are the {\em eigenvalues} of $x$ and  $\{c_1,\dots,c_r\}$ is a {\em Jordan frame,} that is, a collection of non-zero primitive idempotents such that $c_i \circ c_j = 0$ for $i\ne j$, and $c_1+\cdots+c_r = e$.  The {\em trace} of $x \in V$ is defined as
$$
\tr(x) = \sum_{i=1}^r\lambda_i(x) 
$$
Throughout this section we  assume that $(V,\circ,e)$ is an Euclidean Jordan algebra with identity and let $r$ denote the rank of $V$.  Furthermore, we assume that $V$ is endowed with the following trace inner product:
\begin{equation}\label{inner.Jordan}
\ip{x}{y}:= \tr(x\circ y).
\end{equation}
We also assume that $K$ is the cone of squares in $V,$ that is, $K = \{x^2: x\in V\}.$ It is known that $K$ is a {\em symmetric cone,} that is, $K=K^*$ and for all $u,v\in \text{int}(K)$ there exists a linear automorphism $A\in GL(V)$ such that $Au = v$ and $AK=K.$

The following four cases summarize the main examples of Euclidean Jordan algebras that are popular in optimization.  First, $\R^n$ with the componentwise product $(x \circ z)_i = x_i z_i, \; i=1,\dots,n.$  In this case $K = \R^n_+$ and $r=n$.   Second, $\S^n$ with the product $ X\circ Z = \frac{XZ+ZX}{2}$. In this case $K= \S^n_+$ and $r=n$.  Third, $\R^{n}$ with the product $\matr{x_0 \\ \bar x} \circ \matr{z_0 \\ \bar z} = \matr{x\transp z \\ x_0 \bar z + z_0 \bar x}.$  In this case $K = \{(x_0,\bar x)\in \R^n: x_0 \ge \|\bar x\|_2\}$ and $r=2.$  Fourth, any direct product of the above  types of Euclidean Jordan algebras is also an Euclidean Jordan algebra.

For $x\in V$, the vector of eigenvalues $\lambda(x):=(\lambda_1(x),\dots,\lambda_r(x))$ is nicely related to the  norms $\|x\|_2, \|x\|_e, \|x\|_e^*$ as follows
\begin{equation}\label{eq.various.norms}
\|x\|_2 = \|\lambda(x)\|_2, \; \|x\|_{e} = \|\lambda(x)\|_\infty, \; \|x\|_{e}^* =  \|\lambda(x)\|_1.
\end{equation}
It is also easy to see that the eigenvalue mapping $\lambda_e$ satisfies $\lambda_e(x) = \min_{i=1,\dots,r} \lambda_i(x)$ and the set $\Delta(K)$ can be written in any of the following ways
\[
\{x\in K: \|x\|_e^* = 1\} = \{x\in K: \ip{e}{x} = 1\}= \left\{x\in K: \sum_{i=1}^r \lambda_i(x) = 1\right\}.
\]
Part (a) in Proposition~\ref{prop.nu.sigma.symm} below can be seen as an  extension of the identities \eqref{eqn.nu.lp} and \eqref{eqn.nu.sdp} in Section~\ref{sec.norms}.
\begin{proposition}\label{prop.nu.sigma.symm} Assume $L \in \Gr(V,m)$ is such that $L \cap \interior(K) \ne \emptyset.$   
\begin{description}
\item[(a)] If $\vertiii{\cdot} = \|\cdot\|_e$ then 
\[
\sigma(L) =\nu(L) = \max_{x\in L\cap K \atop \|x\|= 1}\min_{i\in 1,\dots,r} \lambda_i(x). \]
In particular, if $\|\cdot\| = \|\cdot\|_e^*$ then
\[
\sigma(L) = \nu(L) = \max_{x\in L\cap \Delta(K)}\min_{i\in 1,\dots,r} \lambda_i(x).
\]
\item[(b)] 
If $\vertiii{\cdot} = \|\cdot\|_e^*$ then 
\[
\sigma(L) = \nu(L) = \min_{c \in \PI} \max_{x\in L\cap K \atop \|x\|= 1} \ip{c}{x}.
\]
In particular, if $\|\cdot\| = \|\cdot\|_e^*$ then
\[
\sigma(L) = \nu(L) = \min_{c \in \PI}\max_{x\in L\cap \Delta(K)} \ip{c}{x}.\]
(Recall that $c\in \PI$ is shorthand for the statement `$c$ is a primitive idempotent.')

\item[(c)] If $\|\cdot\| = \vertiii{\cdot} = \|\cdot\|_2$ then
\[
\sigma(L) = \nu(L) = \min_{u \in K \atop \|u\|_2 =1} \max_{x\in L\cap K \atop \|x\|_2= 1} \ip{u}{x}.
\]
\end{description}
\end{proposition}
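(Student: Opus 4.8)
The proof of each part splits into the identity $\sigma(L)=\nu(L)$ and then the evaluation of $\nu(L)$. For the identity: part~(a) is immediate from Corollary~\ref{corol.nu.sigma}(c) since $\vertiii{\cdot}=\|\cdot\|_e$; part~(c) is immediate from Corollary~\ref{corol.nu.sigma}(b) since $K=K^*$ forces $\Theta(K^*,K)=0$; and for part~(b) I would apply Corollary~\ref{corol.nu.sigma}(a) and check that its outer bounds both equal~$1$ when $\vertiii{\cdot}=\|\cdot\|_e^*$, i.e.\ when $\vertiii{u}^*=\max_i\lambda_i(u)$ and $\vertiii{v}=\tr(v)$ on $K=K^*$. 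That check: given $u\in K$ with $\max_i\lambda_i(u)=1$, take the primitive idempotent $c$ attached to the top eigenvalue in a spectral decomposition of $u$; then $\tr(c)=1$ and $\ip{u}{c}=1$, so $\max_{v\in K,\tr(v)=1}\ip{u}{v}\ge1$, while H\"older's inequality gives $\le1$.

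\textbf{Part (a).} Since $\vertiii{\cdot}=\|\cdot\|_e$, Proposition~\ref{prop.eigenvalue} gives $\nu(L)=\max_{x\in L,\|x\|\le1}\lambda_e(x)=\max_{x\in L,\|x\|\le1}\min_i\lambda_i(x)$, using $\lambda_e(x)=\min_i\lambda_i(x)$. As $L\cap\interior(K)\ne\emptyset$ we have $\nu(L)>0$, so any maximizer $\bar x$ satisfies $\min_i\lambda_i(\bar x)>0$, hence $\bar x\in\interior(K)\subseteq K$; and by positive homogeneity of the eigenvalues, $\|\bar x\|<1$ would let $\bar x/\|\bar x\|$ strictly improve the objective, so $\|\bar x\|=1$. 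This gives the first displayed formula. The ``in particular'' clauses in parts (a) and (b) are then just the identity $\{x\in K:\|x\|_e^*=1\}=\Delta(K)$.

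\textbf{Parts (b) and (c).} Write $\psi_K(w):=\max_{x\in L\cap K,\|x\|=1}\ip{w}{x}$. I would use two elementary facts: (i) if $w\in K^*$ and $\ip{w}{w}=1$ then $\lambda_w(x)\le\ip{w}{x}$ for every $x$ (at $t=\lambda_w(x)$ one has $x-tw\in K$, so $0\le\ip{w}{x-tw}=\ip{w}{x}-t$); and (ii) $w\mapsto\psi_K(w)$ is nondecreasing in the order induced by $K$, since $L\cap K\subseteq K=K^*$. For part~(c), $\|\cdot\|=\vertiii{\cdot}=\|\cdot\|_2$ and every unit $v\in K=K^*$ has $\ip{v}{v}=1$, so (i) and the primal form $\sigma(L)=\min_{v\in K,\|v\|_2=1}\max_{x\in L\cap K,\|x\|_2=1}\lambda_v(x)$ give $\sigma(L)\le\min_{v\in K,\|v\|_2=1}\psi_K(v)$, whereas Proposition~\ref{prop.nu} (with $K^*=K$) together with $L\cap K\subseteq L$ gives $\nu(L)\ge\min_{u\in K,\|u\|_2=1}\psi_K(u)$; since $\sigma(L)=\nu(L)$ the squeeze yields the claimed formula. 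For part~(b), $\vertiii{v}=\tr(v)$ and $\vertiii{u}^*=\lambda_{\max}(u)$. The ``$\le$'' side uses $\PI\subseteq\{v\in K:\tr(v)=1\}$ and (i) (with $\ip{c}{c}=\tr(c)=1$ for $c\in\PI$) to get $\sigma(L)\le\min_{c\in\PI}\max_{x\in L\cap K,\|x\|=1}\lambda_c(x)\le\min_{c\in\PI}\psi_K(c)$. The ``$\ge$'' side starts from $\nu(L)=\min_{u\in K,\lambda_{\max}(u)=1}\max_{x\in L,\|x\|=1}\ip{u}{x}\ge\min_{u\in K,\lambda_{\max}(u)=1}\psi_K(u)$ and uses that every $u\in K$ with $\lambda_{\max}(u)=1$ dominates a primitive idempotent (the eigen-idempotent of its top eigenvalue, as the rest of its spectral part lies in $K$), so by (ii) $\psi_K(u)\ge\min_{c\in\PI}\psi_K(c)$, hence $\nu(L)\ge\min_{c\in\PI}\psi_K(c)$; with $\sigma(L)=\nu(L)$ the two bounds collapse to $\sigma(L)=\nu(L)=\min_{c\in\PI}\psi_K(c)$.

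The routine inputs are the Jordan-algebra facts recalled in the section ($\lambda_e(x)=\min_i\lambda_i(x)$, $\|x\|_e^*=\tr(x)$ on $K$, positive homogeneity of the $\lambda_i$, and that the extreme points of $\Delta(K)$ are the primitive idempotents, which is what makes ``$\lambda_{\max}(u)=1\Rightarrow u\succeq$ some $c\in\PI$'' work) together with Corollary~\ref{corol.nu.sigma}, Proposition~\ref{prop.nu}, and Proposition~\ref{prop.eigenvalue}. The only delicate point is the bookkeeping in the parts~(b) and~(c) squeezes: one must use the \emph{primal} form of $\sigma$ (with $x\in L\cap K$ and $\lambda_v$) on the ``$\le$'' side and the \emph{dual} form of $\nu$ (with $x\in L$ and $\ip{u}{\cdot}$) on the ``$\ge$'' side, and apply the inclusions $L\cap K\subseteq L$ and $\PI\subseteq\Delta(K)$ each on the correct side.
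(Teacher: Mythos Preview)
Your proposal is correct and follows essentially the same route as the paper: part~(a) via Proposition~\ref{prop.eigenvalue} and Corollary~\ref{corol.nu.sigma}(c); parts~(b) and~(c) via the squeeze $\sigma(L)\le\min\psi_K\le\nu(L)$ using exactly the two facts you isolate (namely $\lambda_v(x)\le\ip{v}{x}$ when $\ip{v}{v}=1$, and that $u\in K$ with $\|u\|_e=1$ dominates a primitive idempotent), together with $\sigma(L)\ge\nu(L)$ from Corollary~\ref{corol.nu.sigma}. The only organizational difference is that for part~(b) you pre-establish $\sigma(L)=\nu(L)$ by verifying that both outer bounds in Corollary~\ref{corol.nu.sigma}(a) equal~$1$, whereas the paper uses only the inequality $\sigma(L)\ge\nu(L)$ and recovers equality as a byproduct of the squeeze; your extra check is correct but not needed, since the squeeze already closes without it.
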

\begin{proof}
\begin{description}
\item[(a)] Since $L \cap \interior(K) \ne \emptyset$ and $\vertiii{\cdot} = \|\cdot\|_e$,  Proposition~\ref{prop.eigenvalue} and Corollary~\ref{corol.nu.sigma}(c) yield
\[
\sigma(L) = \nu(L)=\max_{x\in L\atop \|x\| \le 1}  \lambda_{e}(x) = \max_{x\in L\cap K \atop \|x\| = 1} \min_{i=1,\dots,r} \lambda_{i}(x). 
\]
If in addition $\|\cdot\| = \|\cdot\|_e^*$ then $\|x\| = \ip{e}{x}$ for all $x\in K$.  In other words, $\{x\in K: \|x\| = 1\} = \Delta(K)$.  Thus
\[
\sigma(L) =  \nu(L) = \max_{x\in L\cap K \atop \|x\| = 1} \min_{i=1,\dots,r} \lambda_{i}(x) = \max_{x\in L\cap \Delta(K)} \min_{i=1,\dots,r} \lambda_{i}(x).
\] 
\item[(b)] The first statement is a consequence of the following inequalities, which by putting them together must hold with equality:
\begin{equation}\label{eq.claim.1}
\sigma(L) \le \min_{c \in \PI} \max_{x\in L\cap K \atop \|x\|= 1} \lambda_{c}(x)\le \min_{c \in \PI} \max_{x\in L\cap K \atop \|x\|= 1} \ip{c}{x},
\end{equation}
\begin{equation}\label{eq.claim.2}
\nu(L) \ge \min_{u \in K\atop \|u\|_e=1} \max_{x\in L\cap K \atop \|x\|= 1} \ip{u}{x}= \min_{c \in \PI} \max_{x\in L\cap K \atop \|x\|= 1} \ip{c}{x},
\end{equation}
\begin{equation}\label{eq.claim.3}
\sigma(L)\ge \nu(L).
\end{equation}
To prove \eqref{eq.claim.1}, observe that if  $c\in \PI$  then $\ip{c}{c} = 1$ and consequently $\lambda_c(x) \le \ip{c}{x}$ for all $x\in L.$  Furthermore, if $c\in \PI$  then $\|c\|_e^* = \ip{e}{c} = 1$.  Therefore from \eqref{def.sigma.feas} we get
\[
\sigma(L)=\displaystyle\min_{v\in K\atop \ip{e}{v}=1}  \max_{x\in L\cap K \atop \|x\| = 1}\lambda_v(x) \le  \min_{c \in \PI} \max_{x\in L\cap K \atop \|x\|= 1} \lambda_{c}(x) \le \min_{c \in \PI} \max_{x\in L\cap K \atop \|x\|= 1} \ip{c}{x}.
\]
To prove \eqref{eq.claim.2}, observe that if $u\in K$ and $\|u\|_e =1$ then, from the spectral decomposition of $u$ and \eqref{eq.various.norms}, it follows that $u-c \in K$ for some $c\in \PI$ and so $\ip{u}{x} \ge \ip{c}{x}$ for all $x\in L\cap K.$  Thus Proposition~\ref{prop.nu} implies
\[
\nu(L) = \min_{u\in K \atop \|u\|_e  =1} \max_{x\in L \atop \|x\|=1} \ip{u}{x} 
\ge \min_{u\in K \atop \|u\|_e  =1} \max_{x\in L\cap K \atop \|x\|=1} \ip{u}{x} 
=  \min_{c\in \PI} \max_{x\in L\cap K \atop \|x\|=1} \ip{c}{x}. 
\]
Finally \eqref{eq.claim.3} follows from Corollary~\ref{corol.nu.sigma}(a) and H\"older's inequality~\eqref{eq.holder}.

The second statement follows as in part (a).
\item[(c)] The reasoning is almost identical to that in part (b). This is a consequence of the following inequalities whose proofs are straightforward modifications of the proofs of \eqref{eq.claim.1}, \eqref{eq.claim.2}, and \eqref{eq.claim.3}:
\begin{equation}\label{eq.claim.1.c}
\sigma(L) = \min_{u \in K\atop \|u\|_2=1} \max_{x\in L\cap K \atop \|x\|_2= 1} \lambda_u(x)\le \min_{u \in K\atop \|u\|_2=1} \max_{x\in L\cap K \atop \|x\|_2= 1} \ip{u}{x},
\end{equation}
\begin{equation}\label{eq.claim.2.c}
\nu(L) = \min_{u \in K\atop \|u\|_2=1}  \max_{x\in L \atop \|x\|_2= 1} \ip{u}{x}\ge  \min_{u \in K\atop \|u\|_2=1}  \max_{x\in L\cap K \atop \|x\|_2= 1} \ip{u}{x},
\end{equation}
\begin{equation}\label{eq.claim.3.c}
\sigma(L)\ge \nu(L).
\end{equation}
\end{description}
\end{proof}

Corollary~\ref{corol.precond} below is an immediate consequence of Proposition~\ref{prop.nu.sigma.symm}. Corollary~\ref{corol.precond}(b) shows that if the problem \[
\text{ find} \; w \; \text{ such that }\; Aw \in \interior(K)\]
is feasible, then it can be recast as the {\em preconditioned} equivalent problem 
\[
\text{ find} \; w \; \text{ such that }\; (PAR)w \in \interior(K)\]
for suitable linear automorphisms $P,R$ where the latter reformulation is well-conditioned.

\begin{corollary}\label{corol.precond}  Assume $\|\cdot\| = \vertiii{\cdot} = \|\cdot\|_2$.
\begin{description}
\item[(a)] If $L\in \Gr(V,m)$ is such that $L \cap \interior(K)\ne \emptyset$ then there exists a linear automorphism $P\in GL(V)$ such that $PK = K$ and 
\[ \nu(PL) \ge \frac{1}{\sqrt{r}}.\]
\item[(b)] 
Assume $A:W \rightarrow V$ is injective and 
$\Image(A) \cap \interior(K) \ne \emptyset$.  Then there exist linear automorphisms $P\in GL(V), R\in GL(W)$ such that $PK = K$ and
\[
\frac{\|PAR\|}{\Rdist(PAR,\Sigma_m)} \le \sqrt{r}.
\]

\end{description}
\end{corollary}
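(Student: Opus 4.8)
The plan is to get (a) directly from Proposition~\ref{prop.nu.sigma.symm}(c) together with the transitivity of the automorphism group of the symmetric cone $K$ on $\interior(K)$, and then to deduce (b) from (a) by feeding a norm-balanced version of $A$ into Theorem~\ref{thm:Grassmann.nu}(a) and Theorem~\ref{thm.renegar}(a). For (a) I would fix any $x_0\in L\cap\interior(K)$ and use the classical fact that $\{g\in GL(V):gK=K\}$ acts transitively on $\interior(K)$ to obtain $P\in GL(V)$ with $PK=K$ and $Px_0=e$; then $e\in PL$ and $PL\in\Gr(V,m)$. Since $e=c_1+\cdots+c_r$ for a Jordan frame with $\ip{c_i}{c_j}=\delta_{ij}$, one has $\|e\|_2=\sqrt r$, so $\bar x:=e/\sqrt r$ lies in $PL\cap K$ with $\|\bar x\|_2=1$; substituting $\bar x$ into the formula of Proposition~\ref{prop.nu.sigma.symm}(c) for $PL$ gives
\[
\nu(PL)=\min_{u\in K,\ \|u\|_2=1}\ \max_{x\in PL\cap K,\ \|x\|_2=1}\ip{u}{x}
\ \ge\ \min_{u\in K,\ \|u\|_2=1}\ip{u}{\bar x}
=\frac{1}{\sqrt r}\,\min_{u\in K,\ \|u\|_2=1}\tr(u),
\]
and since every $u\in K$ has nonnegative eigenvalues, $\tr(u)=\|\lambda(u)\|_1\ge\|\lambda(u)\|_2=\|u\|_2=1$, so $\nu(PL)\ge 1/\sqrt r$.

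For (b) I would apply (a) to $L:=\Image(A)$ to obtain $P$ with $PK=K$ and $\nu(PL)\ge 1/\sqrt r$. Since $PA:W\to V$ is injective, I would take the symmetric positive-definite $T\in GL(W)$ defined by $\ip{PAw}{PAw'}=\ip{Tw}{w'}$ for all $w,w'\in W$ (the inner product of $W$ on the right-hand side) and set $R:=T^{-1/2}\in GL(W)$; then $\|PAR\,w\|_2=|w|$ for all $w\in W$, so $B:=PAR$ is a linear isometry onto $\Image(B)=PL$ and $\kappa(B)=\|B\|\cdot\|B^{-1}\|=1$. Because $\Image(B)=PL$ still meets $\interior(K)$ (it contains $Px_0$) and $B$ is injective, Theorem~\ref{thm:Grassmann.nu}(a) together with Theorem~\ref{thm.renegar}(a) applied to $B$ yields
\[
\frac{\|PAR\|}{\Rdist(PAR,\Sigma_m)}\ \le\ \frac{\kappa(PAR)}{\dist(PL,\Sigma_m)}\ =\ \frac{1}{\nu(PL)}\ \le\ \sqrt r .
\]

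The only structural inputs are Proposition~\ref{prop.nu.sigma.symm}(c), the transitivity of $\{g\in GL(V):gK=K\}$ on $\interior(K)$, and Theorem~\ref{thm.renegar}; the identities $\|e\|_2=\sqrt r$ and $\|\lambda(u)\|_1\ge\|\lambda(u)\|_2$ for $u\in K$, and the polar-type construction of $R$, are routine. I expect the one point needing care to be the bookkeeping that, after the coordinate changes, $PAR$ is still injective with image $PL\in\Gr(V,m)$ and $PL\cap\interior(K)\ne\emptyset$, so that the hypotheses of Theorem~\ref{thm.renegar}(a) genuinely apply — this holds because $P$ and $R$ are invertible and $P$ preserves $K$, hence $\interior(K)$.
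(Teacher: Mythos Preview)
Your proof is correct and follows essentially the same route as the paper's: for (a) you use the transitivity of the automorphism group to map some $x_0\in L\cap\interior(K)$ to $e$, then plug $\bar x=e/\sqrt r$ into Proposition~\ref{prop.nu.sigma.symm}(c) together with $\tr(u)=\|\lambda(u)\|_1\ge\|\lambda(u)\|_2$; for (b) you choose $R$ so that $PAR$ is an isometry (the paper phrases this as $(PAR)^*(PAR)=I$) and then invoke Theorem~\ref{thm.renegar}(a) with $\kappa(PAR)=1$ and Theorem~\ref{thm:Grassmann.nu}(a). The only differences are expository---you spell out the polar construction of $R$ and the Jordan-frame computation of $\|e\|_2$, whereas the paper leaves these implicit.
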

\begin{proof}
\begin{description}
\item[(a)] 
Let $x_0 \in L \cap \interior(K)$.  Since $K$ is symmetric, there exists $P\in   GL(V)$ such that $PK=K$ and $Px_0 = e.$ Observe that $e\in PL\cap \interior(K)$, $\|e\|_2 = \sqrt{r},$ and $\ip{e}{u} = \|u\|_e \ge \|u\|_2$ for all $u\in K$.  Thus Proposition~\ref{prop.nu.sigma.symm}(c) implies that $\nu(PL) \ge \frac{1}{\sqrt{r}}$.
\item[(b)] Let $L = \Image(A)$ and $P$ be as in part (a).  Let $R\in GL(W)$ be such that $PAR$ is balanced, that is, $(PAR)^*(PAR) = I.$   Thus, by Theorem~\ref{thm.renegar},
\[
\frac{\|PAR\|}{\Rdist(PAR,\Sigma_m)} = \frac{1}{\nu(PL)} \le \sqrt{r}.
\]
\end{description}
\end{proof}

\subsection{Polyhedral case}
We now consider the special case $V = \R^n, K = \R^n_+$.  The peculiar structure of $\R^n_+$ enables a refinement of the the former condition measures  to all $L \in \Gr(\R^n,m)$ including $\Sigma_m$ via a suitable canonical partition of $\{1,\dots,n\}$.   More precisely,  the classical Goldman-Tucker partition theorem implies that for all $L \in \Gr(\R^n,m)$ there exists a unique partition $B\cup N = \{1,\dots,n\}$ such that 
\begin{equation}\label{eq.partition}
L \cap (\interior{\R^B_+}\times\{0_N\}) \ne \emptyset \; \text{ and } \; L^\perp \cap (\{0_B\} \times \interior{\R^N_+}) \ne \emptyset.
\end{equation}
(A detailed proof of the existence of this partition can be found in~\cite{CheuCP03}.)

Note that $L\not \in \Sigma$ if and only if either $B=\emptyset$ or $N=\emptyset$.  Regardless of whether $L \in \Sigma_m$, the partition $B,N$  yields a decomposition into two subproblems each of which is well-posed as we detail next.

Let $V_B:= \R^B\times \{0\}, K_B:= \R^B_+\times \{0\}, \; V_N:=  \{0\}\times\R^N, K_N := \{0\}\times  \R^N_+$ and $L_B:=L\cap V_B, L_N = L^\perp\cap V_N$. Observe that relative to the space-cone pairs $(V_B,K_B)$ and $(V_N,K_N)$ both $L_B$ and $L_N$ are respectively well-posed since $L_B \cap \interior(K_B) \ne\emptyset$ and $L_N \cap \interior(K_N) \ne\emptyset$.  Therefore, the partition $(B,N)$ yields a natural decomposition of the entire previous machinery.  More precisely, we have 
\begin{equation}\label{eq.nu}
\nu(L_B) = \min_{u\in K_B, y\in L_B^\perp\atop \vertiii{u}^*=1} \|u-y\|^*,
\quad \nu(L_N) = \min_{u\in K_N, y\in L_N^\perp\atop \vertiii{u}^*=1} \|u-y\|^*,
\end{equation} 
and 
\begin{equation}\label{eq.sigma}
\sigma(L_B) = 
\min_{v\in K_B\atop \vertiii{v}=1} \max_{x\in L_B\atop \|x\|\le 1}\lambda_v(x), \quad
\sigma(L_N) = 
\min_{v\in K_N\atop \vertiii{v}=1} \max_{x\in L_N\atop \|x\|\le 1}\lambda_v(x).
\end{equation} 
Furthermore, if $k = \dim(L_B)$ and $\ell = \dim(L_N)$ then 
\[
\dist(L_B,\Sigma_{k,B}) = \nu(L_B), \quad \dist(L_N,\Sigma_{\ell,N}) = \nu(L_N)
\]
where $\Sigma_{k,B} \subseteq \Gr(V_B,k)$ and $\Sigma_{\ell,N} \subseteq \Gr(V_N,\ell)$ are respectively the set of $k$-dimensional and $\ell$-dimensional ill-posed subspaces in $V_B$ and $V_N$ respectively.

The following result readily follows from Proposition~\ref{prop.nu.sigma.symm} specialized to each of the two subproblems defined by the partition $(B,N)$.   Proposition~\ref{prop.ye.cond}(b) below shows that $\sigma(L_B)$ and  $\sigma(L_N)$ coincide with the sigma measure introduced by Ye~\cite{Ye94}.

\begin{proposition}\label{prop.ye.cond} Assume $L \in \Gr(\R^n,m)$ and let $(B,N)$ is the partition determined by  $L$ as in \eqref{eq.partition}.  
\begin{description}
\item[(a)] If $\vertiii{\cdot} = \|\cdot\|_\infty$ then 
\[
\sigma(L_B) =\nu(L_B) = \max_{x\in L\cap \R^n_+ \atop \|x\|= 1}\min_{i\in B} x_i, \quad
\sigma(L_N) =\nu(L_N) = \max_{x\in L^\perp\cap \R^n_+ \atop \|x\|= 1}\min_{i\in N} x_i
\]
In particular, if $\|\cdot\| = \|\cdot\|_1$ then
\[
\sigma(L_B) = \nu(L_B) = \max_{x\in L\cap \Delta_{n-1}}\min_{i\in B} x_i, \quad
\sigma(L_N) = \nu(L_N) = \max_{x\in L^\perp\cap \Delta_{n-1}}\min_{i\in N} x_i.
\]
\item[(b)] 
If $\vertiii{\cdot} = \|\cdot\|_1$ then
\[
\sigma(L_B) =\nu(L_B) = \min_{i\in B} \max_{x\in L\cap \R^n_+ \atop \|x\|= 1} x_i, \quad
\sigma(L_N) =\nu(L_N) = \min_{i\in N}\max_{x\in L^\perp\cap \R^n_+ \atop \|x\|= 1} x_i
\]
In particular, if $\|\cdot\| = \|\cdot\|_1$ then
\[
\sigma(L_B) = \nu(L_B) = \min_{i\in B}\max_{x\in L\cap \Delta_{n-1}} x_i, \quad
\sigma(L_N) = \nu(L_N) = \min_{i\in N}\max_{x\in L^\perp\cap \Delta_{n-1}} x_i.
\]
\item[(c)] If $\|\cdot\| = \vertiii{\cdot} = \|\cdot\|_2$ then
\[
\sigma(L_B) =\nu(L_B) = \min_{u\in \R^B_+ \times \{0_N\}\atop \|u\|_2=1} \max_{x\in L\cap \R^n_+ \atop \|x\|_2= 1} \ip{u}{x}, \]
and
\[
\sigma(L_N) =\nu(L_N) = \min_{u\in \{0_B\}\times \R^N_+ \atop \|u\|_2=1}\max_{x\in L^\perp\cap \R^n_+ \atop \|x\|_2= 1} \ip{u}{x}.
\]

\end{description}
\end{proposition}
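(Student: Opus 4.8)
The plan is to invoke Proposition~\ref{prop.nu.sigma.symm} twice, once for the triple $(V_B,K_B,L_B)$ and once for $(V_N,K_N,L_N)$, each of which is an instance of the symmetric-cone setting of Section~\ref{sec.symm.cones}. Indeed, equipping $V_B=\R^B\times\{0_N\}$ with the componentwise product inherited from $\R^n$ and the identity $\mathbf 1_B:=\sum_{i\in B}e_i$ turns $(V_B,\circ,\mathbf 1_B)$ into an Euclidean Jordan algebra of rank $|B|$ whose cone of squares is exactly $K_B$; its eigenvalue map is $x\mapsto(x_i)_{i\in B}$, its primitive idempotents are the coordinate vectors $e_i$ with $i\in B$, and its induced norms $\|\cdot\|_{\mathbf 1_B},\|\cdot\|_{\mathbf 1_B}^*$ are the restrictions to $V_B$ of $\|\cdot\|_\infty$ and $\|\cdot\|_1$, respectively. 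The same remarks hold verbatim for $(V_N,\circ,\mathbf 1_N)$ with $L_N$ in place of $L_B$. By the first half of \eqref{eq.partition} we have $L_B\cap\interior(K_B)\ne\emptyset$ (interior taken relative to $V_B$), and symmetrically $L_N\cap\interior(K_N)\ne\emptyset$, so Proposition~\ref{prop.nu.sigma.symm} applies to both triples. Moreover, since $\nu(L_B),\sigma(L_B)$ as introduced in \eqref{eq.nu} and \eqref{eq.sigma} are computed relative to the same space--cone pair $(V_B,K_B)$, they coincide with the quantities $\nu,\sigma$ produced by Proposition~\ref{prop.nu.sigma.symm} when that result is read inside $(V_B,\circ,\mathbf 1_B)$; likewise for $L_N$.

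It then remains to re-express the right-hand sides over $L\cap\R^n_+$ and $L^\perp\cap\R^n_+$. Using the second half of \eqref{eq.partition}, fix $\bar u\in L^\perp$ with $\bar u_i=0$ for $i\in B$ and $\bar u_j>0$ for $j\in N$. Every $x\in L\cap\R^n_+$ then satisfies $0=\ip{\bar u}{x}=\sum_{j\in N}\bar u_j x_j$ with all summands nonnegative, so $x_j=0$ for $j\in N$; hence $L\cap\R^n_+=L_B\cap K_B$ and, consequently, $L\cap\Delta_{n-1}=L_B\cap\Delta(K_B)$. The symmetric argument, using a vector of $L\cap(\interior(\R^B_+)\times\{0_N\})$, gives $L^\perp\cap\R^n_+=L_N\cap K_N$ and $L^\perp\cap\Delta_{n-1}=L_N\cap\Delta(K_N)$. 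Substituting these identities, together with $\lambda_i(x)=x_i$ and the description of the primitive idempotents, into the three parts of Proposition~\ref{prop.nu.sigma.symm}---with $\vertiii{\cdot}$ taken to be the restriction of $\|\cdot\|_\infty$ for part (a), of $\|\cdot\|_1$ for part (b), and $\|\cdot\|=\vertiii{\cdot}$ the restriction of $\|\cdot\|_2$ for part (c)---yields exactly the claimed identities for $\sigma(L_B),\nu(L_B)$, while the same computation on $V_N$ gives those for $\sigma(L_N),\nu(L_N)$. The sharper ``in particular'' sub-cases, in which additionally $\|\cdot\|=\|\cdot\|_1$, come from the corresponding ``in particular'' clauses of Proposition~\ref{prop.nu.sigma.symm} via the fact that $\|\cdot\|_1$ restricts to $\|\cdot\|_{\mathbf 1_B}^*$ on $V_B$ and that $\{x\in K_B:\|x\|_1=1\}=\Delta(K_B)$.

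The argument is essentially bookkeeping and I do not expect a substantive obstacle. The two points that genuinely require care are: the identification $L\cap\R^n_+=L_B\cap K_B$ (and its $N$-counterpart) furnished by the Goldman--Tucker partition, which is precisely what lets us drop the subscript $B$ from the feasible set and minimise over all of $B$ against the full vector $x$; and the routine verification that the componentwise product makes $V_B$ and $V_N$ into Euclidean Jordan algebras with the stated rank, eigenvalues, primitive idempotents, and induced norms. One should also note the degenerate possibility $L_B=V_B$ (equivalently $L\supseteq V_B$), in which Proposition~\ref{prop.nu.sigma.symm} is to be read with $L_B^\perp=\{0\}$ inside $V_B$ and both sides of the asserted identities equal $1$; the analogous remark applies to $L_N=V_N$.
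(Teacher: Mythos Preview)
Your approach is correct and matches the paper's own proof, which simply states that the result ``readily follows from Proposition~\ref{prop.nu.sigma.symm} specialized to each of the two subproblems defined by the partition $(B,N)$.'' You have filled in the details the paper leaves implicit, most notably the identification $L\cap\R^n_+=L_B\cap K_B$ (and its dual counterpart) via the Goldman--Tucker vector, which is exactly what is needed to rewrite the feasible sets appearing in Proposition~\ref{prop.nu.sigma.symm} in the form stated here.
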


We conclude with a discussion analogous to that in Section~\ref{sec.renegar}.  Assume $A:\R^m \rightarrow \R^n$ is an injective linear mapping and $L = \Image(A)$.  The partition  $(B,N)$ determined by $L$ in turn yields the following decomposition of $A$ and $\R^m$.  First, the mapping $A$ decomposes as $A = \matr{A_B \\ A_N}$.  Second, the space $\R^m$ has the orthogonal decomposition $\R^m = \R^m_B + \R^m_N$ where $\R^m_B:=\Image(A_B\transp)$ and $\R^m_N:=\ker(A_B) = (\R^m_B)^\perp.$  Consequently, the mapping $A$ can be written in the following block matrix form
\[
A = \matr{A_B \\ A_N} = \matr{A_{BB} & 0 \\  A_{NB} & A_{NN}},
\]
where $A_{BB}$ and $A_{NN}$ are both full column-rank
and the spaces $L_B$ and $L_N$ are
\[
L_B = \Image(A_{BB}) \times \{0_N\}, \quad L_N = \{0_B\}\times\ker(A_{NN}\transp).
\]  
Theorem~\ref{thm.renegar} readily yields 
\[
\frac{1}{\|A_{BB}\|} \le \frac{\nu(L_B)}{\Rdist(A_{BB},\Sigma)} \le \|A_{BB}^{-1}\|,
\]
and
\[
\frac{1}{\|A_{NN}\|} \le \frac{\bar\nu(L_N^\perp)}{\Rdist(A_{NN},\Sigma)} \le \|A_{NN}^{-1}\|.
\]
Furthermore, proceeding as in Corollary~\ref{corol.precond}, it follows that if $\|\cdot\| = \vertiii{\cdot} = \|\cdot\|_2,$ then for a suitable positive diagonal matrix $D$ and non-singular matrix $R$ the  space $DL = \Image(\hat A)$ of the preconditioned matrix $\hat A = DAR$ determines the same partition $(B,N)$ as $L=\Image(A)$ and the preconditioned matrix $\hat A$ satisfies
\[
\frac{\|\hat A_{BB}\|}{\Rdist(\hat A_{BB},\Sigma)} = \frac{1}{\nu((DL)_B)} \le \sqrt{k},\]
and
\[ \frac{\|\hat A_{NN}\|}{\Rdist(\hat A_{NN},\Sigma)} = \frac{1}{\nu((DL)_N)} \le \sqrt{\ell}.
\]


\begin{thebibliography}{10}

\bibitem{AmelB12}
D.~Amelunxen and P.~B{\"u}rgisser.
\newblock A coordinate-free condition number for convex programming.
\newblock {\em SIAM J. Optim.}, 22(3):1029--1041, 2012.

\bibitem{AmelB15}
D.~Amelunxen and P.~B{\"u}rgisser.
\newblock Probabilistic analysis of the {G}rassmann condition number.
\newblock {\em Foundations of Computational Mathematics}, 15(1):3--51, 2015.

\bibitem{AmelLMT14}
D.~Amelunxen, M.~Lotz, M.~McCoy, and J.~Tropp.
\newblock Living on the edge: phase transitions in convex programs with random
  data.
\newblock {\em Inf. Inference}, 3(3):224--294, 2014.

\bibitem{Baes09}
M.~Baes.
\newblock {\em Spectral Functions and Smoothing Techniques on Jordan Algebras:
  How algebraic techniques can help to design efficient optimization
  algorithms}.
\newblock Lambert Academic Publishing, 2009.

\bibitem{BellF08}
A.~Belloni and R.~Freund.
\newblock On the symmetry function of a convex set.
\newblock {\em Math. Program.}, 111(1-2, Ser. B):57--93, 2008.

\bibitem{BellF09b}
A.~Belloni and R.~Freund.
\newblock Projective re-normalization for improving the behavior of a
  homogeneous conic linear system.
\newblock {\em Math. Program.}, 118(2, Ser. A):279--299, 2009.

\bibitem{BellF09a}
A.~Belloni and R.~M. Freund.
\newblock A geometric analysis of {R}enegar's condition number, and its
  interplay with conic curvature.
\newblock {\em Math. Program.}, 119(1, Ser. A):95--107, 2009.

\bibitem{BertGS11}
D.~Bertsimas, V.~Goyal, and X.~Sun.
\newblock A geometric characterization of the power of finite adaptability in
  multistage stochastic and adaptive optimization.
\newblock {\em Math. Oper. Res.}, 36(1):24--54, 2011.

\bibitem{BorwL06}
J.~Borwein and A.~Lewis.
\newblock {\em Convex analysis and nonlinear optimization}.
\newblock CMS Books in Mathematics/Ouvrages de Math\'ematiques de la SMC, 3.
  Springer, New York, second edition, 2006.
\newblock Theory and examples.

\bibitem{CheuC01}
D.~Cheung and F.~Cucker.
\newblock A new condition number for linear programming.
\newblock {\em Math. Prog.}, 91(2):163--174, 2001.

\bibitem{CheuCP03}
D.~Cheung, F.~Cucker, and J.~Pe{\~n}a.
\newblock Unifying condition numbers for linear programming.
\newblock {\em Math. Oper. Res.}, 28(4):609--624, 2003.

\bibitem{CCPMultifold}
D.~Cheung, F.~Cucker, and J.~Pe{\~n}a.
\newblock A condition number for multifold conic systems.
\newblock {\em SIAM J. Optim.}, 19(1):261--280, 2008.

\bibitem{CheuCP10}
D.~Cheung, F.~Cucker, and J.~Pe{\~n}a.
\newblock On strata of degenerate polyhedral cones. {II}. {R}elations between
  condition measures.
\newblock {\em J. Complexity}, 26(2):209--226, 2010.

\bibitem{DontLR03}
A.~L. Dontchev, A.~S. Lewis, and R.~T. Rockafellar.
\newblock The radius of metric regularity.
\newblock {\em Trans. Amer. Math. Soc.}, 355(2):493--517 (electronic), 2003.

\bibitem{EpelF02}
M.~Epelman and R.~Freund.
\newblock A new condition measure, preconditioners, and relations between
  different measures of conditioning for conic linear systems.
\newblock {\em SIAM J. Optim.}, 12(3):627--655 (electronic), 2002.

\bibitem{EpelF00}
M.~Epelman and R.~M. Freund.
\newblock Condition number complexity of an elementary algorithm for computing
  a reliable solution of a conic linear system.
\newblock {\em Math Program.}, 88(3):451--485, 2000.

\bibitem{FaraK94}
J.~Faraut and A.~Kor\'anyi.
\newblock {\em Analysis on Symmetric Cones}.
\newblock Oxford Mathematical Monographs, 1994.

\bibitem{Freu04}
R.~Freund.
\newblock Complexity of convex optimization using geometry-based measures and a
  reference point.
\newblock {\em Math Program.}, 99:197--221, 2004.

\bibitem{FreuV99b}
R.~Freund and J.~Vera.
\newblock Condition-based complexity of convex optimization in conic linear
  form via the ellipsoid algorithm.
\newblock {\em SIAM J. on Optim.}, 10:155--176, 1999.

\bibitem{FreuV99a}
R.~Freund and J.~Vera.
\newblock Some characterizations and properties of the ``distance to
  ill-posedness'' and the condition measure of a conic linear system.
\newblock {\em Math Program.}, 86:225--260, 1999.

\bibitem{GoluV96}
G.~Golub and C.~Van Loan.
\newblock {\em Matrix Computations}.
\newblock Johns Hopkins University Press, Baltimore, third edition, 1996.

\bibitem{Lewi99}
A.~Lewis.
\newblock Ill-conditioned convex processes and conic linear systems.
\newblock {\em Math. Oper. Res.}, 24(4):829--834, 1999.

\bibitem{Pena00}
J.~Pe{\~n}a.
\newblock Understanding the geometry on infeasible perturbations of a conic
  linear system.
\newblock {\em SIAM J. on Optim.}, 10:534--550, 2000.

\bibitem{Pena04}
J.~Pe{\~n}a.
\newblock Conic systems and sublinear mappings: equivalent approaches.
\newblock {\em Oper. Res. Lett.}, 32(5):463--467, 2004.

\bibitem{Rene95a}
J.~Renegar.
\newblock Incorporating condition measures into the complexity theory of linear
  programming.
\newblock {\em SIAM J. on Optim.}, 5:506--524, 1995.

\bibitem{Rene95}
J.~Renegar.
\newblock Linear programming, complexity theory and elementary functional
  analysis.
\newblock {\em Math. Programming}, 70(3, Ser. A):279--351, 1995.

\bibitem{SchmA01}
S.~Schmieta and F.~Alizadeh.
\newblock Associative and {J}ordan algebras, and polynomial time interior-point
  algorithms for symmetric cones.
\newblock {\em Math. Oper. Res}, 26(3):543--564, 2001.

\bibitem{SchmA03}
S.~Schmieta and F.~Alizadeh.
\newblock Extension of primal-dual interior point algorithms to symmetric
  cones.
\newblock {\em Math Program.}, 96(3):409--438, 2003.

\bibitem{Ye94}
Y.~Ye.
\newblock Toward probabilistic analysis of interior-point algorithms for linear
  programming.
\newblock {\em Math. of Oper. Res.}, 19:38--52, 1994.

\end{thebibliography}

\end{document}